\newtheorem{theorem}{Theorem}[section]
\newtheorem{lemma}[theorem]{Lemma}
\newtheorem{conjecture}[theorem]{Conjecture}
\newtheorem{question}[theorem]{Question}
\newtheorem{remark}[theorem]{Remark}
\theoremstyle{definition}
\newtheorem{definition}[theorem]{Definition}
\newcommand{\map}{$f: X \longrightarrow Y$ }
\newcommand{\ox}{${\cal O}_{X,p}$ }
\newcommand{\oox}{${\cal O}_X$ }
\newcommand{\oxc}{${\cal \hat{O}}_{X,p}$ }
\newcommand{\oy}{${\cal O}_{Y,q}$ }
\newcommand{\x}{$x_1,...,x_n$}
\newcommand{\xx}{$\bar{x_1},...,\bar{x_n}$}
\def\@seccntformat#1{\csname the#1\endcsname.\quad}
\begin{document}
\title{Toroidalization of Locally Toroidal Morphisms from N-folds to Surfaces}
\author{Krishna Hanumanthu}
\date{}
\maketitle

\section{Introduction}

Fix an algebraically closed field $k$ of characteristic 0. A variety is an open 
subset of an irreducible proper $k$-scheme.

A {\it simple normal crossing} (SNC) divisor on a nonsingular variety
is a divisor $D$ on $X$, all of whose irreducible 
components are nonsingular and
whenever $r$ irreducible components $Z_1,...,Z_r$ of $D$ meet at a point $p$, 
then local equations $x_1,...,x_r$ of $Z_i$ form part of a regular system
of parameters in ${\cal O}_{X,p}$.

If $D$ is a SNC divisor and a point
$p \in D$ belongs to exactly $k$ components of $D$, then we say that $p$ is a 
$k$ point.

A {\it toroidal structure} on a nonsingular variety $X$ is a 
SNC divisor $D_X$. 

The divisor $D_X$ specifies a \textit{toric} chart $(V_p, \sigma_p)$ 
at every closed point $p \in X$ where  $p \in V_p \subset X$ is
an open neighborhood and $\sigma_p: V_p \rightarrow X_p$
is an \'{e}tale morphism to a toric variety $X_p$ such that under
$\sigma_p$
the ideal of $D_X$ at $p$ corresponds to the ideal of the 
complement of the torus in $X_p$.


The idea of a toroidal structure is 
fundamental to algebraic geometry. It is developed in the classic book
``Toroidal Embeddings I" \cite{KKMS}
by G. Kempf, F. Knudsen, D. Mumford and 
B. Saint-Donat.

\begin{definition} \label{defn} (\textup{\cite{KKMS}, \cite{AK}})
Suppose that $D_X$ and $D_Y$ are
toroidal structures on $X$ and $Y$ respectively. 
Let $p \in X$ be a closed point.
A dominant
morphism $f:X \rightarrow Y$ is 
\textit{toroidal at $p$ (with respect to the toroidal structures 
$D_X$ and $D_Y$)} 
if the germ of $f$ at $p$ is formally isomorphic to a 
toric morphism between the toric charts 
at $p$ and $f(p)$. 
$f$ is
\textit{toroidal} 
if it is toroidal at all closed points
in $X$. 
\end{definition}


A nonsingular subvariety $V$ of $X$ 
is a {\it possible center} for $D_X$ if $V \subset D_X$ and $V$ 
intersects $D_X$ transversally. That is, 
$V$ makes $SNCs$ with $D_X$, as defined before Lemma \ref{ni}. 
The blowup $\pi : X_1 \rightarrow X$ of a possible center 
is called a possible blowup. $D_{X_1} = {\pi}^{-1}(D_X)$ is then a 
toroidal structure on $X_1$. 

Let $Sing(f)$ be the set of points $p$ in $X$ where $f$ is not smooth. It 
is a closed set.

The following  
``toroidalization conjecture" is the strongest possible general
structure theorem for morphisms of varieties.

\begin{conjecture}\label{main}
Suppose that \map is a dominant morphism of nonsingular varieties. 
Suppose also that there is a SNC divisor $D_Y$ on $Y$ such that 
$D_X = f^{-1}(D_Y)$ is a SNC divisor on $X$ which contains the 
singular locus, $Sing(f)$, of the map $f$. 

Then there exists a commutative diagram of morphisms 
\begin{displaymath}
\xymatrix @R=3pc @C=3pc{
X_1 \ar[r]^{f_1} \ar[d]^{\pi_1} & Y_1 \ar[d]^{\pi} \\
X  \ar[r]^f    & Y }
\end{displaymath}
where $\pi$, $\pi_1$ are possible blowups for the preimages of $D_Y$ and 
$D_X$ respectively, such that $f_1$ is toroidal with respect to 
$D_{Y_1} = {\pi}^{-1}(D_Y)$ and $D_{X_1} = {\pi_1}^{-1}(D_X)$
\end{conjecture}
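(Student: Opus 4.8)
The plan is to reduce the global statement to a local analysis via completions, solve the local problem one valuation at a time, and then confront the genuinely hard step of patching the local solutions into a single commutative square. Recall that toroidality at a closed point $p$ asserts that the germ of $f$ at $p$ is formally isomorphic to a toric morphism between the toric charts at $p$ and $f(p)$; concretely, after the blowups we must arrange that at every $p \in X_1$ there are regular parameters $x_1,\dots,x_n$ in $\hat{\mathcal O}_{X_1,p}$ and $y_1,\dots,y_m$ in $\hat{\mathcal O}_{Y_1,f_1(p)}$ in which the components of $D_{X_1}$ and $D_{Y_1}$ are coordinate hyperplanes and each $y_j$ is a unit times a monomial in the $x_i$. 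I would insist throughout that every blowup be a possible blowup, so that $\pi^{-1}(D_Y)$ and $\pi_1^{-1}(D_X)$ remain SNC and the hypotheses $D_{X} = f^{-1}(D_{Y})$ and $Sing(f) \subset D_X$ are preserved at each stage.

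The local engine is monomialization along a valuation. Fix a valuation $\nu$ of the function field $k(X)$, with center $p$ on $X$ and $q = f(p)$ on $Y$. The plan is to invoke local monomialization: there exist sequences of monoidal transforms of nonsingular centers above $X$ and $Y$, dominated by $\nu$, after which the transformed map is monomial at the center of $\nu$. One must verify that these centers may be chosen to be possible centers for the respective divisors, so that the blowups are admissible, and that the resulting monomial form is exactly the formal normal form demanded by Definition \ref{defn}. This is where the characteristic-zero hypothesis and the structure theory of valuations --- generating sequences and the analysis of the value group --- do the essential work, and it is available in all relative dimensions.

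The decisive step is to globalize. The Zariski--Riemann space of $k(X)$ is quasi-compact, so the loci on which a given local monomialization succeeds cover it in finitely many pieces, and the problem becomes to splice finitely many blowup sequences into a single pair $(\pi_1,\pi)$ filling the one commutative square. The strategy I would pursue is algorithmic: define a global invariant measuring the failure of $f$ to be toroidal, built as a well-ordered tuple from the worst local invariants over the finitely many strata of $X$ cut out by the incidence pattern of $D_X$ together with the non-toroidal locus, and then exhibit a canonical possible blowup --- dictated by the local monomialization data --- that strictly decreases this invariant, forcing termination by the descending chain condition on the well-ordering. The target blowups must be selected so that $D_X = f^{-1}(D_Y)$ persists and so that correcting $f$ over one locus of $Y$ does not reintroduce non-toroidal behaviour over another.

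The hard part, and the reason the statement remains a conjecture rather than a theorem, is precisely this globalization. Local monomialization is valuation-by-valuation and supplies no uniform bound on the blowups needed, whereas the commutative square forces a single sequence of target transforms to serve all source points simultaneously; a blowup introduced to monomialize $f$ near one point typically raises the local complexity elsewhere, so the descent of any naive invariant fails to be monotone. Constructing an invariant that is at once computable from the formal germs, respected by admissible possible blowups, and strictly decreasing under the canonical transform --- in \emph{arbitrary} relative dimension --- is the core obstacle; the known arguments for surfaces and for $3$-folds over surfaces depend on dimension-specific analyses of the tree of infinitely near points and of the exponent matrices of the monomial maps, and these do not evidently extend. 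I would therefore expect the bulk of the work, and the real novelty, to lie in producing this globally descending invariant and proving termination.
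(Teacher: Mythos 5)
You are attempting to prove Conjecture \ref{main}, but note first that the paper contains no proof of this statement: it is presented precisely as the open toroidalization conjecture, and the paper's actual results (Theorems \ref{final} and \ref{Final}) establish only the much weaker Question \ref{our}, for morphisms that are already \emph{locally} toroidal and with $Y$ a surface. So there is no paper proof to compare against, and your proposal --- as you yourself concede in your closing paragraph --- is a research program rather than a proof. The decisive gap is exactly where you locate it: in the globalization paragraph you \emph{assert} the existence of a well-ordered global invariant, computable from formal germs, compatible with possible blowups, and strictly decreased by a canonical transform, but you construct none of these. That assertion is not a step in a proof; it is a restatement of the conjecture itself. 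Local monomialization (Cutkosky) is valuation-by-valuation, the Zariski--Riemann compactness argument yields finitely many local solutions that genuinely conflict, and no one knows how to splice them in arbitrary relative dimension --- which is why the known results are dimension-specific ($Y$ a curve via embedded resolution, surfaces to surfaces, $3$-folds in \cite{C1} and \cite{C6}, and the strongly prepared case $\dim Y = 2$ in \cite{CK}).

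Two further unverified claims in your local step deserve flagging, since they are nontrivial even before globalization. First, that the centers supplied by local monomialization ``may be chosen to be possible centers'' --- contained in and transverse to the current preimage divisors --- is not automatic; the monoidal transforms in local monomialization are adapted to the valuation, not to a fixed pair of SNC divisors, and forcing compatibility while keeping $D_{X_1} = \pi_1^{-1}(D_X)$ and $f_1^{-1}(D_{Y_1}) = D_{X_1}$ is part of the difficulty. Second, maintaining a \emph{morphism} $f_1: X_1 \rightarrow Y_1$ at all after blowing up $Y$ requires principalizing $m_q\mathcal{O}_X$ for the blown-up points $q$, and generic principalization algorithms can destroy the toroidal or monomial structure already achieved --- this is exactly the obstacle the paper confronts in its weaker setting, and why it builds the bespoke machinery of permissible blowups (Definition \ref{permissible}, Lemma \ref{a}) and the descending invariants $\Omega_i$ and $\omega_i$ (Theorems \ref{1 point} and \ref{2 point}) rather than citing an existing algorithm. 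Those invariants are precisely an instance of the globally descending invariant you call for, but their definition and descent depend on the explicit two-variable normal forms (\ref{t1})--(\ref{t3}) available only because $\dim Y = 2$; your proposal offers no substitute in general, so the argument does not close.
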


A slightly weaker version of the conjecture is stated in the paper 
\cite{AKMW} of D. Abramovich, K. Karu, K. Matsuki, and J. Wlodarczyk.

When $Y$ is a curve, this conjecture follows easily from embedded resolution
of hypersurface singularities, as shown in the introduction of \cite{C1}. 
The case when $X$ and $Y$ are surfaces has been
known before (see Corollary 6.2.3 \cite{AKMW}, 
\cite{AkK}, \cite{CP}). The case 
when $X$ has dimension 3 is completely resolved 
by Dale Cutkosky
in \cite{C1} and \cite{C6}. A special case of dim$(X)$ arbitrary 
and dim$(Y) = 2$ is done in \cite{CK}.

For detailed history and applications of this conjecture,
see \cite{C6}.


A related, but weaker question asked by Dale Cutkosky is 
the following Question \ref{our}. 





To state the question we need the following definition.

\begin{definition}\label{locally toroidal} 
Let $f: X \rightarrow Y$ be a dominant morphism of nonsingular varieties. 
Suppose that the following are true.
\begin{enumerate}
\item There exist open coverings
 $\{U_1,...,U_m\}$ and $\{V_1,...,V_m\}$ 
of $X$ and $Y$ respectively
such that the morphism $f$ restricted to $U_i$
maps into $V_i$ for all $i = 1,...,m$. 
\item There exist simple normal crossings divisors
$D_i$ and $E_i$  
in $U_i$ and $V_i$ respectively such that 
$f^{-1}(E_i) \cap U_i = D_i$ and $Sing({f|}_{U_i}) \subset D_i$ for all 
$i = 1,...,m$.
\item 
The restriction of $f$ to $U_i$,
${f|}_{U_i}: U_i \rightarrow V_i$, is toroidal with respect to $D_i$ and $E_i$
for all $i = 1,...,m$.
\end{enumerate}
Then we say that $f$ is {\it locally toroidal} with respect to 
the open coverings $U_i$ and $V_i$ and SNC divisors $D_i$ and $E_i$.
\end{definition}

For the remainder when we say ``$f$ is locally toroidal", it is to be understood that 
$f$ is locally toroidal with respect to 
the open coverings $U_i$ and $V_i$ and SNC divisors $D_i$ and $E_i$ as in the 
definition.
We will usually not mention $U_i$, $V_i$, $D_i$ and $E_i$.


We have the following.
\begin{question}\label{our}
Suppose that \map is locally toroidal. Does there exist 
a commutative diagram of morphisms 
\begin{displaymath}
\xymatrix @R=3pc @C=3pc{
X_1 \ar[r]^{f_1} \ar[d]^{\pi_1} & Y_1 \ar[d]^{\pi} \\
X  \ar[r]^f    & Y }
\end{displaymath}
where $\pi$, $\pi_1$ are blowups of nonsingular varieties such that 
there exist 
SNC 
divisors $E$, $D$ on $Y_1$ and $X_1$ respectively such that 
$Sing(f_1) \subset D$, ${f_1}^{-1}(E) = D$ and $f_1$
is toroidal with respect to $E$ and $D$?  
\end{question}

The aim of this paper is to give a positive answer
to this question when $Y$ 
is a surface and $X$ is arbitrary. 
The result is proved in Theorem \ref{Final}.\\

\noindent
{\bf Brief outline of the proof:}

The core results (Theorems \ref{final} and \ref{Final}) are proved in 
section 4. Sections 2 and 3 consist of preparatory material.   

Let $f: X \rightarrow Y$ be a locally toroidal morphism with the 
notation as in definition \ref{locally toroidal}. The essential
observation is this: if there is a SNC divisor $E$ on $Y$ such that 
$E_i \subset E$ for all $i$, then $f$ is toroidal with respect to 
$E$ and $f^{-1}(E)$. A proof of this observation is contained in the proof
of Theorem \ref{Final}. 

The main task, then, is to construct the divisor $E$. This is not hard: 
consider the divisor $E' = \bar{E_1} + ... + \bar{E_m}$ where
$\bar{E_i}$ is the Zariski closure of $E_i$ in
$Y$. By embedded resolution of 
singularities, there exists a finite sequence of
blowups of points $\pi: Y_1 \rightarrow Y$ such that 
${\pi}^{-1}(E^{\prime})$ 
is a 
SNC divisor on $Y_1$. 

The problem now reduces to constructing a sequence of blowups 
$\pi_1: X_1 \rightarrow X$ such that there is a locally toroidal morphism 
$f_1: X_1 \rightarrow Y_1$. This is done in Theorem \ref{final}.

Sections 2 and 3 prepare the ground for Theorem \ref{final}.

Given the sequence of blowups $\pi: Y_1 \rightarrow Y$ as above, there 
exist principalization algorithms which give a sequence of blowups 
$\pi_1: X_1 \rightarrow X$ so that there exists a morphism 
$f_1: X_1 \rightarrow Y_1$. The main difficulty we face is that such
a morphism $f_1$ may not be locally toroidal. So a blanket appeal to existing
principalizing algorithms can not be made. In sections 2 and 3, we 
construct a specific algorithm that works in our situation.

Section 2 deals with the blowups that preserve the local toroidal structure.
We call these {\it{permissible blowups}} (definition \ref{permissible}). The
main result of section 2 is Lemma \ref{a} which analyzes the effect of a
permissible sequence of blowups.

In section 3, we define invariants on nonprincipal locus of the morphism
$f$. These invariants are positive integers and we prescribe permissible
sequences of blowups under which these invariants drop 
(Theorems \ref{1 point} and \ref{2 point}). Finally we achieve 
principalization in Theorem \ref{principalization}.

\section{Permissible Blowups}
Let \map be a locally toroidal morphism from a nonsingular
$n$-fold $X$ to a nonsingular surface $Y$ with respect to 
open coverings $\{U_1,...,U_m\}$ and $\{V_1,...,V_m\}$ 
of $X$ and $Y$ respectively and SNC divisors 
$D_i$ and $E_i$  
in $U_i$ and $V_i$ respectively. Then we have the following

\begin{lemma}
Let $p \in D_i$. Then there exist regular parameters 
$x_1,...,x_n$ in ${\cal \hat{O}}_{X,p}$ and regular
parameters $u,v$ in ${\cal O}_{Y,q}$ such
that one of the following forms holds:

$1 \leq k \leq n-1:$ $u = 0$ is a local equation
of $E_i$, $x_1...x_k = 0$ is a local equation of $D_i$
and 
\begin{eqnarray}\label{t1}
u = {x_1}^{a_1}...{x_k}^{a_k}, ~~~ v = x_{k+1},
\end{eqnarray} 
where
$a_1,...,a_k > 0$. 

$1  \leq k \leq n-1:$ $uv = 0$ is a local
equation for $E_i$, $x_1...x_k = 0$ is a local
equation of $D_i$ and
\begin{eqnarray}\label{t2}
u = {({x_1}^{a_1}...{x_k}^{a_k})}^m, ~ v =
{({x_1}^{a_1}...{x_k}^{a_k})}^t(\alpha+x_{k+1}),
\end{eqnarray}
where 
$a_1,...,a_k, m, t> 0$ and $\alpha \in K - \{0\}$.

$2 \leq k \leq n:$ $uv = 0$ is a local equation
of $E_i$, $x_1...x_k = 0$ is a local equation of $D_i$
and 
\begin{eqnarray}\label{t3}
u = {x_1}^{a_1}...{x_k}^{a_k}, ~~ v =
{x_1}^{b_1}...{x_k}^{b_k},
\end{eqnarray}
where  $a_1,...,a_k,b_1,...,b_k
\geq 0, a_i+b_i > 0$ for all $i$ and \\
rank $\left[
\begin{array}{cccc} a_1 & .&.& a_k \\b_1 &.&.& b_k \end{array}
\right] = 2 $.
\end{lemma}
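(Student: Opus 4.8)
The plan is to read the three normal forms off directly from the definition of toroidality, treating $f^*u$ and $f^*v$ as monomials in suitable coordinates. Since $p \in D_i$ and $D_i = f^{-1}(E_i)\cap U_i$, the image $q = f(p)$ lies on $E_i$; because $Y$ is a surface and $E_i$ is SNC, $q$ is either a $1$-point or a $2$-point, and I may fix regular parameters $u,v \in \mathcal{O}_{Y,q}$ so that $E_i$ is $u=0$ in the first case and $uv=0$ in the second. By Definition \ref{defn} the germ of $f$ at $p$ is formally isomorphic, compatibly with the boundaries, to a toric morphism between the toric charts at $p$ and $q$. A toric morphism is monomial on the tori, so pulling the target toric coordinates back through the formal isomorphism produces regular parameters $x_1,\dots,x_n \in \hat{\mathcal{O}}_{X,p}$ with $x_1\cdots x_k = 0$ a local equation of $D_i$ (here $k$ is the number of components of $D_i$ through $p$) and with $f^*u, f^*v$ Laurent monomials in the toric coordinates. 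The one bookkeeping point is that the non-boundary toric coordinates take nonzero values at $p$, so in the regular parameters they only contribute unit factors; thus I may write $f^*u = x_1^{a_1}\cdots x_k^{a_k}\, w_1$ and $f^*v = x_1^{b_1}\cdots x_k^{b_k}\, w_2$ with $w_1,w_2$ units in $\hat{\mathcal{O}}_{X,p}$.

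First I would settle the two cases in which the unit factors can be removed. In the $1$-point case $v$ comes from the torus direction on $Y$, and after centering this coordinate it pulls back to a regular parameter involving only the transverse toric directions; renaming it $x_{k+1}$ (which forces $k \le n-1$) and absorbing $w_1$ into the monomial gives $u = x_1^{a_1}\cdots x_k^{a_k}$, where the condition $D_i = f^{-1}(E_i)$ forces every component through $p$ to occur, i.e.\ each $a_j > 0$. This is the form (\ref{t1}). In the $2$-point case, when the boundary exponent vectors $(a_1,\dots,a_k)$ and $(b_1,\dots,b_k)$ are linearly independent, the monomial map has rank $2$ already on the boundary variables, and a monomial change of the coordinates $x_1,\dots,x_k$ lets me solve for units absorbing both $w_1$ and $w_2$, leaving $u = x_1^{a_1}\cdots x_k^{a_k}$ and $v = x_1^{b_1}\cdots x_k^{b_k}$; the requirement $D_i = f^{-1}(E_i)$ then gives $a_j+b_j>0$ for all $j$, and rank $2$ forces $k \ge 2$. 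This is the form (\ref{t3}).

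The remaining case — $q$ a $2$-point with $(a_1,\dots,a_k)$ and $(b_1,\dots,b_k)$ proportional — is the crux. Here both pullbacks are unit multiples of powers of a single primitive monomial $M = x_1^{a_1}\cdots x_k^{a_k}$, say $f^*u = M^m w_1$ and $f^*v = M^t w_2$, so the boundary data is only rank $1$ and the dominance of $f$ must be supplied by a transverse direction: the unit $w_2^m / w_1^t$ equals $v^m/u^t$ up to the boundary monomial and hence cannot be constant. I can absorb $w_1$ into $M^m$ to make $u = (x_1^{a_1}\cdots x_k^{a_k})^m$ a pure power, but the shared monomial $M$ prevents me from simultaneously purifying $v$; the residual unit in $v$ is non-constant and transverse, so after normalizing it into the shape $\alpha + x_{k+1}$ with $\alpha \in K\setminus\{0\}$ (which introduces the transverse coordinate $x_{k+1}$ and so forces $k \le n-1$) I obtain $v = (x_1^{a_1}\cdots x_k^{a_k})^t(\alpha + x_{k+1})$, the form (\ref{t2}); consistency with $D_i = f^{-1}(E_i)$ holds because $\alpha + x_{k+1}$ is a unit at $p$, so $v = 0$ still cuts out a subset of $D_i$. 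I expect this partial normalization to be the main obstacle, since it is exactly where a pure monomial description fails and one must extract a transverse coordinate and show the leftover unit can be placed in precisely this form while keeping $u$ a pure power of $M$.
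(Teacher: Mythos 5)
Your plan --- reading the normal forms directly off the formal toric model --- is a legitimate route, and it is genuinely different from the paper, whose entire proof of this lemma is a citation of Lemma 4.2 of \cite{CK}. But the step you yourself flag as the crux contains a real gap. In the proportional-exponent case you justify the normalization $v = M^t(\alpha + x_{k+1})$ by arguing that the residual unit cannot be constant (because $w_2^m w_1^{-t} = f^*(v^m/u^t)$ and $f$ is dominant). Non-constancy is all that dominance gives, and it is strictly weaker than what form (\ref{t2}) requires, namely that the residual unit $w$ satisfy $dw|_p \notin \langle dx_1,\ldots,dx_k\rangle$, so that $w - w(p)$ can serve as the transverse parameter $x_{k+1}$. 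Concretely, take $n=3$, $k=1$, $E_i = \{uv=0\}$, $D_i = \{x_1=0\}$, and the germ with $f^*u = x_1$, $f^*v = x_1(1+x_2^2+x_1x_3)$. This germ is dominant, $f^{-1}(E_i) = D_i$ is SNC and contains $Sing(f) = \{x_1=0\}$, the pullbacks are monomials times units with proportional exponent vectors ($m=t=1$, $M=x_1$), and the residual unit $1+x_2^2+x_1x_3$ is non-constant --- every intermediate assertion in your crux paragraph holds --- yet its differential at $p$ is zero, and one checks that no admissible choice of parameters puts this germ into any of the forms (\ref{t1}), (\ref{t2}), (\ref{t3}): it is simply not toroidal. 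So your chain of inferences, applied verbatim, would ``prove'' the lemma for a non-toroidal germ; the transversality must come from toroidality itself, not from dominance. The repair is to stay inside the toric model one step longer: there $f^*\hat{u} = x^a\mu_1$ and $f^*\hat{v} = x^b\mu_2$ are characters, with $\mu_1,\mu_2$ Laurent monomials in the transverse torus coordinates; dominance of the toric morphism makes the two full exponent vectors in $\mathbb{Z}^n$ independent, so when $a = mg$ and $b = tg$ are proportional the Laurent monomial $\mu_2^m\mu_1^{-t}$ is nontrivial and hence (characteristic $0$) has nonzero differential transverse to $\langle dx_1,\ldots,dx_k\rangle$; and replacing $\hat{u},\hat{v}$ by your chosen $u,v$ only introduces units pulled back from the base, whose differentials at $p$ lie in $\langle dx_1,\ldots,dx_k\rangle$. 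That computation is the actual content of this case, and it is what the phrase ``and transverse'' needs to stand on.

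A second, smaller issue: your blanket claim that $f^*u$ and $f^*v$ are monomials times units is false in the $1$-point case, since there $f^*v$ vanishes on $f^{-1}(\{v=0\})$, which is not contained in $D_i$; and your treatment of form (\ref{t1}) silently replaces the $v \in {\cal O}_{Y,q}$ you fixed at the outset by the centered toric coordinate, which lives only in ${\cal \hat{O}}_{Y,q}$. The lemma requires $u,v$ to be regular parameters in ${\cal O}_{Y,q}$; only the $x_i$ are allowed to be formal. This is repairable by the same differential computation as above: keep an arbitrary $v \in {\cal O}_{Y,q}$ completing $u$ to a regular system of parameters, write $v = \alpha\hat{u} + \beta\hat{v} + \cdots$ with $\beta \neq 0$, and check that $d(f^*v)|_p \equiv \beta\, d(f^*\hat{v})|_p \not\equiv 0$ modulo $\langle dx_1,\ldots,dx_k\rangle$, so that $f^*v$ itself can be taken as the coordinate $x_{k+1}$. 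With these two points supplied, your direct argument would be a complete and self-contained alternative to the paper's citation.
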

\begin{proof}
This follows from Lemma 4.2 in \cite{CK}.
\end{proof}

\begin{definition}
Suppose that $D$ is a SNC divisor on a variety $X$, and $V$ is a nonsingular subvariety 
of $X$. We say that $V$ makes SNCs with $D$ at $p \in X$ if there exist regular parameters
$x_1,...,x_n$ in \ox and $e, r \leq n$ such that $x_1...x_e=0$ is a local equation of $D$ at $p$
and $x_{\sigma(1)}=...=x_{\sigma(r)}=0$ is a local equation of $V$ at $p$ for some injection
$\sigma: \{1,...,r\} \rightarrow \{1,...,n\}$.

We say that $V$ makes SNCs with $D$ if $V$ makes SNCs with $D$ at all points $p \in X$.
\end{definition}

Let $q \in Y$ and let $m_q$ be the maximal ideal
of \oy. 

Define $W_q = \{p \in X~ | ~m_q$\ox is not
principal$\}$. Note that the closed subset 
$W_q \subset f^{-1}(q)$ and that $m_q$\ox is 
principal if and only if $m_q$\oxc is principal.

\begin{lemma}\label{ni} For all $q \in Y$, $W_q$ is a union of nonsingular codimension 2 subvarieties
of $X$, which make SNCs with each divisor $D_i$ on $U_i$. 
\end{lemma}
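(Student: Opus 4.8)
The plan is to reduce the statement to a formal-local computation at each point of the fibre $f^{-1}(q)$, carried out separately for the two kinds of points one can meet. Since $m_q\mathcal{O}_X$ is principal at $p$ if and only if $m_q\hat{\mathcal{O}}_{X,p}$ is principal, and since $W_q\subset f^{-1}(q)$, it suffices to fix $q$, write $m_q=(u,v)$ with $u,v$ regular parameters at $q$, and decide at each $p\in f^{-1}(q)$ whether the ideal generated by the pullbacks of $u$ and $v$ in $\hat{\mathcal{O}}_{X,p}$ is principal, describing the non-principal locus in a formal neighbourhood of $p$. Every $p$ lies in some chart $U_i$, and I split into the cases $p\notin D_i$ and $p\in D_i$.

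If $p\in U_i$ but $p\notin D_i$, then $p\notin Sing(f|_{U_i})$ because $Sing(f|_{U_i})\subset D_i$, so $f$ is smooth at $p$. Then the pullbacks $s,t$ of $u,v$ form part of a regular system of parameters at $p$, the fibre $f^{-1}(q)$ is nonsingular of codimension $2$ near $p$, and $m_q\hat{\mathcal{O}}_{X,p}=(s,t)$ is a height-two complete intersection, hence not principal. Thus near such a point $W_q$ is exactly the smooth fibre $\{s=t=0\}$, which is nonsingular of codimension $2$ and (as $D_i$ is empty there) makes SNCs with $D_i$.

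If $p\in D_i$, I apply the local forms of the first Lemma of this section at $p$. In form (\ref{t2}) one has $u=M^m$ and $v=M^t(\alpha+x_{k+1})$ with $M=x_1^{a_1}\cdots x_k^{a_k}$ and $\alpha+x_{k+1}$ a unit, so $(u,v)=(M^{\min(m,t)})$ is principal throughout the formal neighbourhood and $p\notin W_q$. In form (\ref{t1}), with $u=M$ and $v=x_{k+1}$, the ideal $(u,v)$ becomes the unit ideal wherever $x_{k+1}$ or all of $x_1,\dots,x_k$ are units, and is non-principal exactly where $x_{k+1}=0$ together with some $x_i=0$ for $i\leq k$; hence locally $W_q=\bigcup_{i=1}^{k}\{x_i=x_{k+1}=0\}$. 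In form (\ref{t3}), with $u,v$ the two monomials, I use that at a nearby point the ideal is generated by the two monomials in only those $x_i$ that vanish there, and that two monomials generate a principal ideal precisely when one divides the other; this shows the non-principal locus is $\bigcup\{x_i=x_j=0\}$ taken over the pairs with $(a_i-b_i)(a_j-b_j)<0$. In every form the resulting pieces are coordinate subspaces of codimension $2$ in the same regular parameters that cut out $D_i=\{x_1\cdots x_k=0\}$, so they are nonsingular of codimension $2$ and make SNCs with $D_i$.

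Finally I patch the local descriptions: $W_q$ is a closed set that in a formal neighbourhood of each of its points is a union of nonsingular codimension-$2$ coordinate subspaces meeting $D_i$ transversally, so each irreducible component of $W_q$ is nonsingular of codimension $2$ and the whole of $W_q$ makes SNCs with each $D_i$. I expect the main obstacle to be the monomial analysis in form (\ref{t3}): one must carry the divisibility criterion across the torus-orbit stratification of the formal neighbourhood to pin the non-principal locus down to the union of coordinate planes indexed by the incomparable exponent pairs, and then argue that this formal description descends to an honest union of subvarieties of $X$ making SNCs with $D_i$ in the sense of the definition (which is phrased in $\mathcal{O}_{X,p}$ rather than its completion).
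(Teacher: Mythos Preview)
Your approach is essentially the paper's: split by whether $q\in E_i$, rule out form~(\ref{t2}) as giving a principal ideal, and in forms~(\ref{t1}) and~(\ref{t3}) read off the non-principal locus as a union of codimension-$2$ coordinate subspaces in the formal parameters. Your monomial analysis is correct, and for~(\ref{t3}) it is actually more explicit than the paper, which simply says ``we argue similarly.''

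The step you correctly flag as the obstacle---descending the formal description to an SNC statement in $\mathcal{O}_{X,p}$---is in fact the substantive technical content of the paper's proof, and you have not carried it out. The definition of ``makes SNCs with $D_i$'' demands a common regular system of parameters in $\mathcal{O}_{X,p}$ cutting out both the components of $W_q$ and those of $D_i$, while your $x_1,\dots,x_n$ live only in $\hat{\mathcal{O}}_{X,p}$. The paper handles this (for form~(\ref{t1})) by first choosing honest parameters $y_1,\dots,y_n\in\mathcal{O}_{X,p}$ with $y_1\cdots y_k=0$ a local equation of $D_i$, then matching them to the formal $x_j$ via units $x_j=\delta_j y_j$ in $\hat{\mathcal{O}}_{X,p}$, and observing that $x_{k+1}=v$ already lies in $\mathcal{O}_{X,p}$. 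Faithful flatness of completion then shows $y_1,\dots,y_k,x_{k+1},y_{k+2},\dots,y_n$ is a regular system in $\mathcal{O}_{X,p}$. Writing $u=\bar\delta\, y_1^{a_1}\cdots y_k^{a_k}$, one has $\bar\delta\in\mathrm{QF}(\mathcal{O}_{X,p})\cap\hat{\mathcal{O}}_{X,p}$, which equals $\mathcal{O}_{X,p}$ (the paper cites \cite{C}, Lemma~2.1), so $\bar\delta$ is a unit in $\mathcal{O}_{X,p}$. Then $\sqrt{(u,v)\mathcal{O}_{X,p}}=\bigcap_{j=1}^{k}(y_j,x_{k+1})$ is computed entirely inside $\mathcal{O}_{X,p}$ as an intersection of height-$2$ regular primes sharing parameters with $D_i$, and the SNC claim follows directly. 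Your outline becomes a proof once you supply this descent, together with its analogue for form~(\ref{t3}).
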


\begin{proof}
Let us fix a $q \in Y$ and denote $W = W_q$. Let $\mathfrak{I}_W$ be the
reduced ideal sheaf of $W$ in $X$, and let $\mathfrak{I}_q$ 
be the reduced ideal sheaf of $q$ in $Y$.

Since the conditions that $W$ is nonsingular and has codimension 2 in $X$ are
both local properties, we need only check that for all $p \in W$,
$\mathfrak{I}_{W,p}$ is an intersection of height
2 prime ideals which are regular.

Since $X$ is nonsingular, $\mathfrak{I}_{q}$\oox = \oox$(-F)\cal{I}$ where $F$
is an effective Cartier divisor on $X$ and $\cal{I}$ is an ideal sheaf such that
the support of ${\cal{O}}_X/\cal{I}$ has codimension at least 2 on $X$. We have
$W = $ supp(${\cal{O}}_X/\cal{I}$). The ideal sheaf of $W$ is 
$\mathfrak{I}_W = \sqrt{\cal{I}}$.

Let $p \in W$. We have that $p \in U_i$ for some $1 \leq i \leq m$. 

Suppose first that $q \notin E_i$. Then $f$ is smooth at $p$ because
it is locally toroidal. 
This means that there are regular parameters $u,v$ at $q$ 
which form a part of a regular sequence at $p$. So we have 
regular parameters \x~   in ${\cal {O}}_{X,p}$ such that 
$u = x_1, v = x_2.$  

$\mathfrak{I}_{q}$\ox = $(u,v)$\ox = $(x_1,x_2)$\ox. It follows that 
$\mathfrak{I}_{W,p} = (x_1,x_2)$${\cal O}_{X,p}$. This gives us the lemma. 

Suppose now that $q \in E_i$. 

Since $p \in W_q$, 
there exist regular
parameters $x_1,...,x_n$ in \oxc and $u,v$ in \oy such that one of the forms
(\ref{t1}) or (\ref{t3}) holds.

Suppose that (\ref{t1}) holds. Since $D_j$ is a SNC divisor, there exist regular
parameters $y_1,...,y_n$ in \ox and some $e$ such that $y_1...y_e = 0$ is a
local equation of $D_j$.

Since $x_1...x_k = 0$ is a local equation
for $D_j$ in \oxc, there exists a unit series $\delta \in$ \oxc such that
$y_1...y_e = \delta x_1...x_k$. 
Since the $x_i$ and $y_i$ are irreducible in \oxc, 
it follows that $e = k$, and there exist
unit series $\delta_i \in$ \oxc such that $x_i = \delta_iy_i$ for
$1 \leq i \leq k$, after possibly reindexing the $y_i$.

Note that $y_1,...,y_k,x_{k+1},y_{k+2},...,y_n$ is a regular system of
parameters in \oxc, after possibly permuting $y_{k+1},...,y_n$. 

So the ideal
$(y_1,...,y_k,x_{k+1},y_{k+2},...,y_n)$\oxc is the maximal ideal of 
${\cal \hat{O}}_{X,p}$.
Since $x_{k+1} = v \in$ \ox, $y_1,...,y_k,x_{k+1},y_{k+2},...,y_n$
generate an ideal $J$ in ${\cal O}_{X,p}$.  
Since \oxc is faithfully flat over ${\cal O}_{X,p}$, and 
$J{\cal \hat{O}}_{X,p}$ is maximal,
it follows that $J$ is the maximal ideal of ${\cal O}_{X,p}$.
Hence $y_1,...,y_k,x_{k+1},y_{k+2},...,y_n$ is a regular system of parameters in 
${\cal O}_{X,p}$.

Rewriting (\ref{t1}), we have $u = {y_1}^{a_1}...{y_k}^{a_k} \bar{\delta}$, 
where $\bar{\delta}$ is a unit in ${\cal \hat{O}}_{X,p}$. 

Since
$\bar{\delta} = \frac{u}{{y_1}^{a_1}...{y_k}^{a_k}}$, 
$\bar{\delta} \in$ QF$({\cal O}_{X,p}) \cap$ \oxc, where
QF$({\cal O}_{X,p})$  is  the quotient field of ${\cal O}_{X,p}$.
By Lemma 2.1 in \cite{C}, it follows that $\bar{\delta} \in$ \ox.

Since $\bar{\delta}$ is a unit in ${\cal \hat{O}}_{X,p}$, 
it is a unit in ${\cal O}_{X,p}$.

We have 

\begin{eqnarray*}
\mathfrak{I}_{W,p} &=& \sqrt{\mathfrak{I}_q {\cal O}_{X,p}} 
= \sqrt{(u,v){\cal O}_{X,p}}  
= \sqrt{({y_1}^{a_1}...{y_k}^{a_k},x_{k+1})} \\
&=& 
(y_1,x_{k+1})
\cap (y_2,x_{k+1}) 
\cap ... \cap (y_{k},x_{k+1}),
\end{eqnarray*}
as required.





We argue similarly when (\ref{t3}) holds at $p$.
\end{proof}

Let $Z$ be a nonsingular codimension 2 subvariety of $X$ such that
$Z \subset W_q$ for some $q$. Let $\pi_1: X_1 \rightarrow X$ be the blowup of $Z$.
Denote by $(W_1)_q$ the set $\{p \in X_1~ |~ m_q{\cal \hat{O}}_{X_1,p}$
is not invertible$\}$.

Given any sequence of blowups $X_n \rightarrow X_{n-1} \rightarrow
... \rightarrow X_1 \rightarrow X$, we 
define $(W_i)_q$ for each $X_i$ as above.

\begin{definition} \label{permissible}
Let $q \in Y$. A sequence of blowups $X_k \rightarrow X_{k-1}
\rightarrow ... \rightarrow X_1 \rightarrow X$ is called a
{\it permissible sequence with respect to $q$} if 
for all $i$, 
each blowup $X_{i+1} \rightarrow X_i$ is
centered at a nonsingular codimension 2 subvariety $Z$ of $X_i$ such
that $Z \subset (W_i)_q$.
\end{definition}

We will often write simply permissible sequence without mentioning
$q$
if there is no scope for confusion. 

\begin{lemma}\label{a} Let $f: X \rightarrow Y$ be a locally toroidal
morphism.
Let $\pi_1: X_1 \rightarrow X$ be a permissible sequence with respect to a
$q \in Y$. \\

\noindent
{\bf I} Suppose that $1 \leq i \leq m$ 
and $p \in {(f \circ \pi_1)}^{-1}(q) \cap {\pi_1}^{-1}(U_i)$
and $q \in E_i$. Then 
{\bf I.A} and {\bf I.B} as below hold.\\

\noindent
{\rm \bf I.A}. There exist regular parameters \x~   in ${\cal \hat{O}}_{X_1,p}$ and
$(u,v)$ in \oy such that one of the following forms holds:

\begin{itemize}
\item[] 1 $\leq k \leq n-1$: $u = 0$ is a local
equation of $E_i$, $x_1...x_k = 0$ is a local equation of
${\pi_1}^{-1}(D_i)$ and 
\begin{eqnarray}\label{1}
u = {x_1}^{a_1}...{x_k}^{a_k},  v =
{x_1}^{b_1}...{x_k}^{b_k}x_{k+1},
\end{eqnarray}
where $b_i \leq a_i$.

\item[]1 $\leq k \leq n-1$: $u = 0$ is a local
equation of $E_i$, $x_1...x_kx_{k+1} = 0$ is a local equation of
${\pi_1}^{-1}(D_i)$ and 
\begin{eqnarray}\label{2}
u = {x_1}^{a_1}...{x_k}^{a_k}{x_{k+1}}^{a_{k+1}},  v =
{x_1}^{b_1}...{x_k}^{b_k}{x_{k+1}}^{b_{k+1}},
\end{eqnarray}
where $b_i \leq a_i$
for $i = 1,...,k$ and $b_{k+1} < a_{k+1}$.

\item[] 1 $\leq k \leq n-1$: $u = 0$ is a local
equation of $E_i$, $x_1...x_k = 0$ is a local equation of
${\pi_1}^{-1}(D_i)$ and 
\begin{eqnarray}\label{3}
u = {x_1}^{a_1}...{x_k}^{a_k},  v =
{x_1}^{b_1}...{x_k}^{b_k}(x_{k+1}+\alpha),
\end{eqnarray}
where $b_i \leq a_i$
for all $i$ and $0 \neq \alpha \in K$.

\item[] 1 $ \leq k \leq n-1$: $uv = 0$ is a local
equation for $E_i$, $x_1...x_k = 0$ is a local equation of
${\pi_1}^{-1}(D_i)$ and 
\begin{eqnarray}\label{4}
u = {({x_1}^{a_1}...{x_k}^{a_k})}^m,  v =
{({x_1}^{a_1}...{x_k}^{a_k})}^t(\alpha+x_{k+1}), 
\end{eqnarray}
where
$a_1,...,a_k, m, t> 0$ and $\alpha \in K - \{0\}$.
\item[]2 $\leq k \leq n$: $uv = 0$ is a local
equation of $E_i$, $x_1...x_k = 0$ is a local equation of
${\pi_1}^{-1}(D_i)$ and 
\begin{eqnarray}\label{5}
u = {x_1}^{a_1}...{x_k}^{a_k}, v =
{x_1}^{b_1}...{x_k}^{b_k},
\end{eqnarray} 
where $a_1,...,a_k,b_1,...,b_k \geq 0$,
$a_i+b_i > 0$ for all $i$ and rank $\left[ \begin{array}{cccc} a_1 &
.&.& a_k \\b_1 &.&.& b_k \end{array} \right]$= 2.
\end{itemize}

\noindent
{\rm \bf I.B}. Suppose that $p_1 \in (W_1)_q$. 
There exist regular parameters \x~   in ${\cal \hat{O}}_{X_1,p}$ and
$(u,v)$ in \oy such that one of the following forms holds:
\begin{itemize}
\item[] 1 $\leq k \leq n-1$: $u = 0$ is a local
equation of $E_i$, $x_1...x_k = 0$ is a local equation of
${\pi_1}^{-1}(D_i)$ and 
\begin{eqnarray}\label{n1}
u = {x_1}^{a_1}...{x_k}^{a_k},  v =
{x_1}^{b_1}...{x_k}^{b_k}x_{k+1}, 
\end{eqnarray}
where $b_i \leq a_i$ and $b_i < a_i$ for
some $i$.
Moreover, the local equations of $(W_1)_q$ are 
$x_i = x_{k+1}= 0$ where $b_i < a_i$.

\item[]2 $\leq k \leq n$: $uv = 0$ is a local
equation of $E_i$, $x_1...x_k = 0$ is a local equation of
${\pi_1}^{-1}(D_i)$ and 
\begin{eqnarray}\label{n2}
u = {x_1}^{a_1}...{x_k}^{a_k}, v =
{x_1}^{b_1}...{x_k}^{b_k},
\end{eqnarray}
where $a_1,...,a_k,b_1,...,b_k \geq 0$,
$a_i+b_i > 0$ for all $i$, $u$ does not divide $v$, $v$ does not divide $u$, and rank $\left[ \begin{array}{cccc} a_1 &
.&.& a_k \\b_1 &.&.& b_k \end{array} \right]$= 2.
Moreover, the local equations of $(W_1)_q$ are 
$x_i = x_j= 0$ where $(a_i-b_i)(b_j-a_j) > 0$.
\end{itemize}
{\bf II} Suppose that $1 \leq i \leq m$ and $p \in {(f \circ \pi_1)}^{-1}(q) \cap {\pi_1}^{-1}(U_i)$
and $q \notin E_i$. Then 
{\bf II.A} and {\bf II.B} as below hold.\\

\noindent
{\bf II.A} There exist regular parameters \x~   in ${\cal \hat{O}}_{X_1,p}$ and
$(u,v)$ in \oy such that one of the following forms holds:

\begin{eqnarray}\label{6}
u = x_1, v = x_2
\end{eqnarray}

\begin{eqnarray}\label{7}
u = x_1, v = x_1(x_2+\alpha) ~\text {for some} ~\alpha \in K. 
\end{eqnarray}

\begin{eqnarray}\label{8} 
u = x_1x_2, v = x_2.
\end{eqnarray}
\noindent
{\bf II.B} Suppose that $p_1 \in (W_1)_q$. 
There exist regular parameters \x~   in ${\cal \hat{O}}_{X_1,p}$ and
$(u,v)$ in \oy such that the following form holds:

\begin{eqnarray}\label{n3}
u = x_1, v = x_2. 
\end{eqnarray}
The local equations of $(W_1)_q$ are 
$x_1 = x_2 = 0$.\\

\noindent
{\bf III} $(W_1)_q$ is a union of nonsingular codimension 2 subvarieties of
$X_1$. 
\end{lemma}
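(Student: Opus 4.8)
```latex
The plan is to prove Lemma \ref{a} by induction on the length of the
permissible sequence $\pi_1 : X_1 \rightarrow X$, establishing parts
\textbf{I}, \textbf{II}, and \textbf{III} simultaneously at each stage.
The base case (sequence of length $0$, so $X_1 = X$) is essentially the
content of the earlier unnumbered Lemma (forms (\ref{t1})--(\ref{t3}))
together with Lemma \ref{ni}: the structure-theorem forms are exactly
(\ref{t1}) and (\ref{t3}) when $q \in E_i$, the smooth case gives
(\ref{6}) when $q \notin E_i$, and the descriptions of $W_q$ in
Lemma \ref{ni} supply the local equations asserted in parts
\textbf{I.B}, \textbf{II.B}, and \textbf{III}.

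For the inductive step, I would factor the permissible sequence as
$X_1 \rightarrow X'\rightarrow X$, where $X' \rightarrow X$ is a
permissible sequence for which the lemma is already known and
$X_1 \rightarrow X'$ is the blowup of a single nonsingular codimension
$2$ center $Z \subset (W')_q$. The essential task is then a direct
local computation: fix $p \in (f \circ \pi_1)^{-1}(q)$ lying over some
$p' \in X'$, and compute the effect of the blowup of $Z$ on the monomial
expressions for $u$ and $v$ in the completed local ring. When $p'$
does \emph{not} lie on $Z$, nothing changes and the form is inherited.
When $p' \in Z$, I would use parts \textbf{I.B} and \textbf{II.B} of the
inductive hypothesis, which pin down the local equations of $Z$ (namely
$x_i = x_{k+1} = 0$ in case (\ref{n1}), $x_i = x_j = 0$ in case
(\ref{n2}), and $x_1 = x_2 = 0$ in case (\ref{n3})). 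In each case the
blowup introduces coordinate charts in which the new regular parameters
are ratios and products of the old ones, and one checks by substitution
that the transformed $u$ and $v$ again fall into one of the listed forms
(\ref{1})--(\ref{5}) (for $q \in E_i$) or (\ref{6})--(\ref{8}) (for
$q \notin E_i$), with the exponent inequalities $b_i \leq a_i$ preserved.
Tracking how $(W')_q$ transforms under the blowup then yields the
refined descriptions in \textbf{I.B} and \textbf{II.B}, and part
\textbf{III} follows because a blowup along a nonsingular center, applied
to a set that is locally a union of nonsingular codimension $2$ strata
meeting the exceptional structures transversally, again produces such a
union.

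The main obstacle I expect is the bookkeeping in part \textbf{I.B}:
verifying that after blowing up the indicated center the nonprincipal
locus $(W_1)_q$ is still cut out by the stated pairs of coordinates, and
in particular that the sign condition $(a_i - b_i)(b_j - a_j) > 0$ in
case (\ref{n2}) correctly characterizes which coordinate hyperplanes
intersect to give $(W_1)_q$. This requires carefully enumerating the
charts of the blowup and, in each, recomputing the exponent matrix
$\left[\begin{smallmatrix} a_1 & \dots & a_k \\ b_1 & \dots & b_k
\end{smallmatrix}\right]$ to determine where $m_q \hat{\mathcal{O}}$ fails
to be principal; one must confirm that the rank-$2$ condition persists
and that the monomial $u$ (resp.\ $v$) fails to divide the other exactly
along the claimed locus. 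The computations are routine monomial algebra
once the center is known, but they must be done for each of the forms
(\ref{1})--(\ref{5}) and each chart, so organizing the cases so that the
argument does not balloon is the real work.
```
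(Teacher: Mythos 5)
Your proposal matches the paper's own proof in all essentials: the paper argues by induction on the number of blowups in the permissible sequence, with the base case supplied by Lemma \ref{ni} and local toroidality (forms (\ref{t1})--(\ref{t3}) when $q \in E_i$, smoothness giving (\ref{6}) when $q \notin E_i$), the inductive step carried out by a chart-by-chart monomial computation at a point of the center $Z$ whose local equations are pinned down by the inductive hypotheses {\bf I.B}/{\bf II.B}, and {\bf III} deduced by covering $X_1$ with the ${\pi_1}^{-1}(U_i)$. The one device your sketch leaves implicit, and which the paper spells out, is the passage between formal and algebraic coordinates needed to blow up the algebraic center $Z$: the paper first produces regular parameters $y_1,\dots,y_k,x_{k+1},y_{k+2},\dots,y_n$ in ${\cal O}_{X_1,p_1}$ with $y_i = \delta_i x_i$ for unit series $\delta_i$ (via Lemma 2.1 of \cite{C}), describes the blowup charts in these, and then converts back to formal parameters, which is where constant-term adjustments such as $\bar{\alpha} = c\alpha$ enter the computation.
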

\begin{proof}

\noindent

{\bf I}
We prove this part by induction on the number of blowups in the 
sequence $\pi_1: X_1 \rightarrow X$. In
$X$ the conclusions hold because of Lemma \ref{ni} and $f$ is locally 
toroidal.
Suppose that the conclusions of the lemma
hold after any sequence of $l$ permissible blowups where $l \geq 0$.  

Let $\pi_1: X_1 \rightarrow X$ be a permissible sequence (with respect to
$q$) of $l$ blowups. Let $\pi_2: X_2 \rightarrow X_1$ be the blowup of 
a nonsingular codimension 2 subvariety $Z$ of $X_1$ such that $Z \subset (W_1)_q$. 

Let $p \in {\pi_2}^{-1}({\pi_1}^{-1}(U_i)) \cap {(f \circ \pi_1 \circ \pi_2)}^{-1}(q)$ for 
some $1 \leq i \leq m$.

If $p_1 = \pi_2(p) \notin Z$ then $\pi_2$ is an isomorphism at $p$ and
we have nothing to
prove. Suppose then that $p_1 \in {\pi_1}^{-1}(U_i) \cap Z \subset 
{\pi_1}^{-1}(U_i) \cap (W_1)_q$.

Then by induction hypothesis ({\bf I.B}) 
$p_1$ has the form (\ref{n1}) or (\ref{n2}). Suppose
first that it has the form (\ref{n1}). 

Then the local equations of $Z$
at $p_1$ are $x_i = x_{k+1} = 0$ for some $1 \leq i \leq k$. Note
that $b_i < a_i$. 

As in the proof of Lemma \ref{ni}, there exist regular parameters 
$y_1,...,y_k,$ $x_{k+1},y_{k+2},...,y_n$ in ${\cal O}_{X_1,p_1}$ and
unit series $\delta_i \in $${\cal \hat{O}}_{X_1,p_1}$ such that 
$y_i = \delta_i x_i$ for $1 \leq i \leq k$.

Then ${\cal {O}}_{X_2,p}$ has one of the following two forms: 

\begin{itemize}
\item[(a)] ${\cal {O}}_{X_2,p} = {{\cal {O}}_{X_1,p_1}[\frac{x_{k+1}}{y_i}]}_{(y_i,\frac{x_{k+1}}{y_i}-\alpha)}$ 
for some $\alpha \in K$, or 
\item[(b)] ${\cal {O}}_{X_2,p} = {{\cal {O}}_{X_1,p_1}[\frac{y_i}{x_{k+1}}]}_{(x_{k+1},\frac{y_i}{x_{k+1}})}$
\end{itemize}

In case(a), set $\bar{y}_{k+1} = \frac{x_{k+1}}{y_i}-\alpha$. Then 
$y_1,...,y_k,\bar{y}_{k+1},y_{k+2},...,y_n$ are regular parameters in 
${\cal {O}}_{X_2,p}$ and so
${\cal \hat {O}}_{X_2,p} = k[[y_1,...,y_k,\bar{y}_{k+1},y_{k+2},...,y_n]].$

Let $c \neq 0$ be the constant term of the unit series $\delta_i$. 

Then evaluating $\delta_i$ in the local ring ${\cal {O}}_{X_2,p}$ we get,
\begin{eqnarray*}
  \delta_i(y_1,...,y_k,x_{k+1},y_{k+2},...,y_n) 
&=& \delta_i(y_1,...,y_k,y_i(\bar{y}_{k+1}+\alpha),y_{k+1},...,y_n) \\
& = & c + \Delta_1 y_1 + ... + \Delta_k y_k + \Delta_{k+2} y_{k+2} + ... + \Delta_n y_n
\end{eqnarray*}
for some $\Delta_i \in {\cal {O}}_{X_2,p}$. 

Set $\bar{\alpha} = c\alpha$. Note that 
$\frac{x_{k+1}}{x_i} - \bar{\alpha} = 
\delta_i \frac{x_{k+1}}{y_k} - c\alpha = \delta_i(\bar{y}_{k+1}+\alpha) - c\alpha
= \delta_i \bar{y}_{k+1} + (\delta_i - c)\alpha.$

Since $y_1,...,y_k,\bar{y}_{k+1},y_{k+2},...,y_n$ are regular parameters in 
${\cal \hat {O}}_{X_2,p}$
the above calculations imply that 
$x_1,...,x_k,\frac{x_{k+1}}{x_i} - \bar{\alpha},y_{k+2},...,y_n$
are regular parameters in ${\cal \hat {O}}_{X_2,p}$. 

Set $\bar{x}_{k+1} = \frac{x_{k+1}}{x_k}-\bar{\alpha}$.

We get $u = {x_1}^{a_1}...{x_k}^{a_k}, ~ v =
{x_1}^{b_1}...{\bar{x_i}}^{b_i+1}...{x_k}^{b_k}(\bar{x}_{k+1}+\alpha).$

This is the form (\ref{3}) if $\alpha \neq 0$ and form (\ref{1}) if
$\alpha = 0$.

In case (b), set $\bar{y}_{k+1} = \frac{y_i}{x_{k+1}}$. Then 
$y_1,...,y_k,\bar{y}_{k+1},y_{k+2},...,y_n$ are regular parameters in 
${\cal {O}}_{X_2,p}$ and so
${\cal \hat {O}}_{X_2,p} = k[[y_1,...,y_k,\bar{y}_{k+1},y_{k+2},...,y_n]].$

Then $x_1,...,x_k,\frac{x_i}{x_{k+1}},y_{k+2},...,y_n$ are regular parameters 
in ${\cal \hat {O}}_{X_2,p}$. Set $\bar{x}_i = \frac{x_i}{x_{k+1}}$.

$u = {x_1}^{a_1}...{\bar{x_i}}^{a_i}...{x_k}^{a_k}{x_{k+1}}^{a_i}, ~ v =
{x_1}^{b_1}...{\bar{x_i}}^{b_i+1}...{x_k}^{b_k}x_{k+1}.$

This is the form (\ref{2}).

By the above analysis, when $p_1 = \pi_2(p)$ has form (\ref{n1}), 
if $p \in (W_2)_q$, then it also has to be of the form 
(\ref{n1}).

Suppose now that $p_1$ has the form (\ref{n2}). Then the local
equations of $Z$ at $p_1$ are $x_i = x_j = 0$ for some $1 \leq i,j
\leq k$. 

Then as in the above analysis 
there exist regular parameters 
$y_1,...,...,y_n$ in ${\cal O}_{X_1,p_1}$ and
unit series $\delta_i \in $${\cal \hat{O}}_{X_1,p_1}$ such that 
$y_i = \delta_i x_i$ for $1 \leq i \leq k$.

Then ${\cal {O}}_{X_2,p}$ has one of the following two forms: 

\begin{itemize}
\item[(a)] ${\cal {O}}_{X_2,p} = {{\cal {O}}_{X_1,p_1}[\frac{y_i}{y_j}]}_{(y_j,\frac{y_i}{y_j}-\alpha)}$ 
for some $\alpha \in K$, or 
\item[(b)] ${\cal {O}}_{X_2,p} = {{\cal {O}}_{X_1,p_1}[\frac{y_j}{y_i}]}_{(y_i,\frac{y_j}{y_j})}$
\end{itemize}

Arguing as above in case (a) we obtain regular parameters $x_1,...,\bar{x}_i,...,x_n$ in 
${\cal \hat{O}}_{X_2,p}$ so that 
$$u =
{x_1}^{a_1}...{{(\bar{x}}_i+\alpha)}^{a_i}...{x_j}^{a_i+a_j}...{x_k}^{a_k},
v =
{x_1}^{b_1}...{{(\bar{x}}_i+\alpha)}^{b_i}...{x_j}^{b_i+b_j}...{x_k}^{b_k}.$$
This is the form (\ref{5}) if $\alpha = 0$.

If $\alpha \neq 0$, we obtain either the form (\ref{5}) or the form
(\ref{4}) according as rank of \\ $\left[ \begin{array}{ccccccccccc} a_1
& .&.& a_i+a_j& .&.& a_{j-1}&a_{j+1}&.&.& a_k
\\b_1 &.&.& b_i+b_j&.&.& b_{j-1}&b_{j+1}&.&.& b_k
\end{array} \right]$ is $=$ 2 or $<$ 2.

Again arguing as above in case (b) we obtain regular parameters $x_1,...,\bar{x}_j,...,x_n$ in 
${\cal \hat{O}}_{X_2,p}$ so that 
$$u =
{x_1}^{a_1}...{x_i}^{a_i+a_j}...{\bar{x_j}}^{a_j}...{x_k}^{a_k},
v =
{x_1}^{b_1}...{x_i}^{b_i+b_j}...{\bar{x_j}}^{b_j}...{x_k}^{b_k}.$$
This is the form (\ref{5}).

By the above analysis, when $p_1 = \pi_2(p)$ has the form (\ref{n2}), 
if $p \in (W_2)_q$, then it also has to be of the form 
(\ref{n2}).

This completes the proof of {\bf I.A} for $X_2$. Now {\bf I.B} is clear as the forms 
(\ref{n1}) and (\ref{n2}) are just the forms 
(\ref{1}) and (\ref{5}) from {\bf I.A}. \\

\noindent{\bf II} 
We prove this part by induction on the number of blowups in the 
sequence $\pi_1: X_1 \rightarrow X$. 

Since $q \notin E_i$ and $f$ is locally toroidal,
$f$ is smooth 
at any point $p_1 \in f^{-1}(q)$. This means that the regular parameters at $q$ 
form a part of a regular sequence at $p$. So we have 
regular parameters \x~   in ${\cal \hat{O}}_{X,p_1}$ and
$u,v$ in \oy such that 
$u = x_1, v = x_2.$  This is the form (\ref{6}).
Thus the conclusions hold in $X$. 
Suppose that the conclusions of the lemma
hold after any sequence of $l$ permissible blowups where $l \geq 0$.  

Let $\pi_1: X_1 \rightarrow X$ be a permissible sequence (with respect to
$q$) of $l$ blowups. Let $\pi_2: X_2 \rightarrow X_1$ be the blowup of 
a nonsingular codimension 2 subvariety $Z$ of $X_1$ such that $Z \subset (W_1)_q$. 

Let $p \in {\pi_2}^{-1}({\pi_1}^{-1}(U_i)) \cap {(f \circ \pi_1 \circ \pi_2)}^{-1}(q)$ for 
some $1 \leq i \leq m$.

If $p_1 = \pi_2(p) \notin Z$ then $\pi_2$ is an isomorphism at $p$ and
we have nothing to
prove. Suppose then that $p_1 \in {\pi_1}^{-1}(U_i) \cap Z \subset 
{\pi_1}^{-1}(U_i) \cap (W_1)_q$.

Then by induction hypothesis ({\bf II.B}) 
$p_1$ has the form (\ref{n3}).  
Then the local equations of $Z$
at $p_1$ are $x_1 = x_2 = 0$.  

There exist regular parameters $\bar{x}_1,\bar{x}_2$ in 
${\cal \hat{O}}_{X_2,p}$ such that one of the
following forms holds:

$x_1 = \bar{x}_1, x_2 = \bar{x}_1(\bar{x}_2+\alpha)$ for some 
$\alpha \in K$ or $x_1 = \bar{x}_1\bar{x}_2, x_2 = \bar{x}_2$.
These two cases give the forms (\ref{7}) and (\ref{8}).

Now {\bf II.B} is clear as the form
(\ref{n3}) is just the form (\ref{6}) from {\bf II.A}. \\

\noindent
{\bf III} Since $\{{\pi_1}^{-1}(U_i)\}$ for $1 \leq i \leq m$
is an open cover of $X_1$ and ${\pi_1}^{-1}(U_i) \cap (W_1)_q$
is a union of nonsingular codimension 2 subvarieties of $X_1$ for all
$i$ by {\bf I} and {\bf II}, $(W_1)_q$ is a union of nonsingular codimension
2 subvarieties of $X_1$.
\end{proof}

\section{Principalization}
Let \map be a locally toroidal morphism from a nonsingular
$n$-fold $X$ to a nonsingular surface $Y$ with respect to 
open coverings $\{U_1,...,U_m\}$ and $\{V_1,...,V_m\}$ 
of $X$ and $Y$ respectively and SNC divisors 
$D_i$ and $E_i$  
in $U_i$ and $V_i$ respectively.

In this section we fix an $i$ between 
1 and $m$ and a $q \in Y$.

Let $\pi_1: X_1 \rightarrow X$ be a permissible sequence with respect to
$q$. Our aim is to construct a permissible sequence $\pi_2:X_2 \rightarrow X_1$ such that 
$\pi_2 \circ \pi_1: X_2 \rightarrow X$ is a permissible sequence and
${\pi_2}^{-1}({\pi_1}^{-1}(U_i)) \cap (W_2)_q$ is empty.

First suppose that $q \notin E_i$. If 
$p \in {\pi_1}^{-1}(U_i)$, then by Lemma \ref{a} one of
the forms (\ref{6}), (\ref{7}) or (\ref{8}) holds at $p$.  





\begin{theorem}\label{not in E_i} Let $\pi_1: X_1 \rightarrow X$ be a 
permissible sequence with respect to $q \in Y$. 
Let $i$ be such that $q \notin E_i$. 
Then there exists a permissible sequence $\pi_2: X_2 \rightarrow X_1$
with respect to $q$
such that 
${\pi_2}^{-1}({\pi_1}^{-1}(U_i)) \cap (W_2)_q$ is empty.
\end{theorem}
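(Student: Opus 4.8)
The plan is to reduce everything to the single local computation already carried out in the proof of Lemma~\ref{a}, where blowing up a center on which $f\circ\pi_1$ has the form (\ref{6}) produces the forms (\ref{7}) and (\ref{8}). First I would record which points of $\pi_1^{-1}(U_i)\cap(f\circ\pi_1)^{-1}(q)$ actually lie in $(W_1)_q$. By part (II.A) of Lemma~\ref{a}, every such point has one of the forms (\ref{6}), (\ref{7}), (\ref{8}). In form (\ref{7}) the ideal $m_q\mathcal{O} = (x_1, x_1(x_2+\alpha)) = (x_1)$ is principal, and in form (\ref{8}) the ideal $(x_1x_2, x_2) = (x_2)$ is principal; only in form (\ref{6}) is $m_q\mathcal{O} = (x_1, x_2)$ non-principal. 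Hence, consistently with part (II.B) of Lemma~\ref{a}, a point of $\pi_1^{-1}(U_i)$ lies in $(W_1)_q$ exactly when form (\ref{6}) holds, and there the local equations of $(W_1)_q$ are $x_1 = x_2 = 0$.

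The key observation is that a single blowup suffices to clear such a point. Indeed, the computation in the proof of part (II) of Lemma~\ref{a} shows that if $Z$ is cut out by $x_1 = x_2 = 0$ at a form-(\ref{6}) point and we blow up $Z$, then in the two charts of the blowup $f\circ\pi_1$ acquires the forms (\ref{7}) and (\ref{8}) respectively. Since neither of these lies in $W_q$, every point of the exceptional divisor over $Z\cap\pi_1^{-1}(U_i)$ is removed from $W_q$, while away from $Z$ the blowup is an isomorphism. Thus blowing up a component of $(W_1)_q$ removes the form-(\ref{6}) locus lying above it over $\pi_1^{-1}(U_i)$ and creates no new point of $W_q$ there.

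With this in hand, I would build $\pi_2$ by repeatedly blowing up irreducible components of the current locus $(W)_q$ that meet the preimage of $U_i$. Each such component is a nonsingular codimension-$2$ subvariety by part (III) of Lemma~\ref{a} and is contained in $(W)_q$, so each blowup is permissible and the composite remains a permissible sequence. Over $\pi_1^{-1}(U_i)$ the form-(\ref{6}) locus is smooth, since there $f\circ\pi_1$ is smooth, and locally $V(x_1,x_2)$ is irreducible, so distinct components of $(W)_q$ are disjoint over $\pi_1^{-1}(U_i)$. Consequently each blowup lowers the number of components of $(W)_q$ meeting the preimage of $U_i$ by at least one and, by the previous paragraph, introduces none. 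As this number is finite, after finitely many permissible blowups we reach $X_2$ with $\pi_2^{-1}(\pi_1^{-1}(U_i))\cap(W_2)_q=\emptyset$.

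The step I expect to be the main obstacle is the global bookkeeping rather than the local geometry: a component of $(W_1)_q$ meeting $\pi_1^{-1}(U_i)$ may, outside that open set, meet other components (where some $E_j$ is relevant, so the form (\ref{n2}) picture with several local branches can occur), and one must ensure that blowing it up keeps the ambient sequence permissible and keeps $(W)_q$ a union of nonsingular codimension-$2$ subvarieties. This is exactly what part (III) of Lemma~\ref{a} guarantees, which lets me recompute the components of $(W)_q$ at each stage instead of tracking strict transforms; the disjointness and the strict decrease over $\pi_1^{-1}(U_i)$ then make the termination argument go through cleanly.
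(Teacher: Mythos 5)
Your proposal is correct and follows essentially the same route as the paper: identify that over $\pi_1^{-1}(U_i)$ the non-principal locus consists of form-(\ref{6}) points with local equations $x_1=x_2=0$, blow up (the closure of) such a codimension 2 component, invoke the computation in the proof of Lemma \ref{a} (forms (\ref{7}) and (\ref{8}) are principal) to see that no non-principal points survive or appear over the center, and terminate by induction on the number of components. Your extra paragraph on global bookkeeping (components extending outside $\pi_1^{-1}(U_i)$, permissibility of blowing up Zariski closures, disjointness of components over $\pi_1^{-1}(U_i)$) only makes explicit what the paper's terse induction leaves implicit.
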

\begin{proof}

If ${\pi_1}^{-1}(U_i) \cap (W_2)_q$ is empty, then there is nothing
to prove. So suppose that ${\pi_1}^{-1}(U_i) \cap (W_2)_q \ne \emptyset$.
By Lemma \ref{ni}, it is a union of codimension 2 subvarieties of 
${\pi_1}^{-1}(U_i)$.

Let
$Z \subset {\pi_1}^{-1}(U_i) \cap (W_1)_q$ be a 
subvariety of ${\pi_1}^{-1}(U_i)$ of codimension 2.

Let $\pi_2: X_2 \rightarrow X_1$ be the blowup of the Zariski closure
$\bar{Z}$ of $Z$ in $X_1$. Let $Z_1 \subset {\pi_2}^{-1}(Z)$
be a codimension 2 subvariety of ${\pi_2}^{-1}({\pi_1}^{-1}(U_i))$ such that 
$Z_1 \subset {\pi_2}^{-1}({\pi_1}^{-1}(U_i)) \cap (W_2)_q$.

By the proof of Lemma \ref{a} it follows that 
$Z_1 \cap (W_2)_q = \emptyset$. 

The theorem now follows by induction on the number of codimension 2 subvarieties 
$Z$ in 
${\pi_1}^{-1}(U_i) \cap (W_1)_q $.
\end{proof}

Now we suppose that $q \in E_i$.

\begin{remark}\label{1 point goes to 1 point}
Suppose that $\pi_1: X_1 \rightarrow X$ is a permissible sequence with 
respect to some $q \in E_i$. 
Let $\pi_2: X_2 \rightarrow X_1$ be a permissible blowup with respect 
to $q$. Let
$p_1 \in {\pi_2}^{-1}({\pi_1}^{-1}(U_i)) \cap (W_2)_q$. Then 
clearly $p = \pi_2(p_1) \in {\pi_1}^{-1}(U_i) \cap (W_1)_q$.

Suppose that $p_1$ is a 1 point. Then the analysis in the proof 
of Lemma \ref{a} shows that $p$ also is a 1 point. 

Suppose that $p_1$ is a 2 point where the form (\ref{n2}) holds. 
Then the analysis in the proof of Lemma \ref{a} shows that
$p$ is a 2 or 3 point where the from (\ref{n2}) holds. 
\end{remark}

Suppose that $\pi_1: X_1 \rightarrow X$ is a permissible sequence
with respect to $q \in E_i$. 

Let $p \in {\pi_1}^{-1}(U_i) \cap (W_1)_q$ be a 1 point. By Lemma \ref{a}, there exist regular parameters
$x_1,...,x_n$ in ${\cal \hat{O}}_{X_1,p}$ and $u,v$ in \oy such that $u = {x_1}^a,~ v = {x_1}^bx_2$ where
$a > b$.

Define $\Omega_i(p) = a-b > 0.$

Let $Z \subset {\pi_1}^{-1}(U_i) \cap (W_1)_q$ be a codimension 2 subvariety of ${\pi_1}^{-1}(U_i)$.

Define $\Omega_i(Z) = \Omega_i(p)$ if there exists a 1 point $p \in Z$. This is well defined because
$\Omega_i(p) = \Omega_i(p^{\prime})$ for any two points $p, p^{\prime} \in Z$.

If $Z$ contains no 1 points, we define $\Omega_i(Z) = 0$.

Finally define 
\begin{eqnarray*}
\Omega_i(f \circ \pi_1) = max\{\Omega_i(Z) | Z \subset {\pi_1}^{-1}(U_i) \cap 
 (W_1)_q {\rm ~is~ an~ irreducible}\\
{\rm ~subvariety ~of~} {\pi_1}^{-1}(U_i) {\rm ~of~ codimension~} 2\}
\end{eqnarray*}

\begin{theorem}\label{1 point} Let $\pi_1: X_1 \rightarrow X$ be a 
permissible sequence with respect to $q \in E_i$.
There exists a permissible sequence $\pi_2: X_2 \rightarrow X_1$
with respect to $q$ such that $\Omega_i(f \circ \pi_1 \circ \pi_2) = 0$.
\end{theorem}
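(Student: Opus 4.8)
The plan is to induct on the invariant $\Omega_i(f\circ\pi_1)$, killing one maximal component of the nonprincipal locus at a time. First I would record the finiteness I need: since $X_1$ is Noetherian, $\pi_1^{-1}(U_i)\cap(W_1)_q$ has only finitely many irreducible components, each of codimension $2$ and nonsingular by Lemma \ref{a}(III). Given such a component $Z$ with $\Omega_i(Z)>0$, its Zariski closure $\bar Z$ in $X_1$ is an irreducible codimension $2$ subvariety contained in the closed set $(W_1)_q$, hence contained in (so equal to) one of the nonsingular codimension $2$ components of $(W_1)_q$; thus $\bar Z$ is nonsingular of codimension $2$ and contained in $(W_1)_q$, i.e. it is a legitimate permissible center with respect to $q$.

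The heart of the argument is a local computation at a $1$ point. Let $Z$ be a component with $\Omega_i(Z)=r:=\Omega_i(f\circ\pi_1)\ge 1$, and let $p_1\in Z$ be a $1$ point, so by Lemma \ref{a}(I.B) there are regular parameters with $u=x_1^{a}$, $v=x_1^{b}x_2$, $a-b=r$, and $Z$ has local equations $x_1=x_2=0$. Blowing up $x_1=x_2=0$ I would examine the two charts. In the chart $x_1=x_1'$, $x_2=x_1'x_2'$ one gets $u=(x_1')^{a}$, $v=(x_1')^{b+1}x_2'$; the point $x_1'=x_2'=0$ is a $1$ point (form (\ref{n1}) with $k=1$) lying in $(W_2)_q$ exactly when $b+1<a$, i.e. when $r\ge 2$, and there $\Omega_i=a-(b+1)=r-1$, while the points with $x_2'$ a nonzero constant are of form (\ref{3}) and hence principal. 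In the chart $x_1=x_1'x_2'$, $x_2=x_2'$ the origin gives $u=(x_1')^{a}(x_2')^{a}$, $v=(x_1')^{b}(x_2')^{b+1}$, and since $b\le a$ and $b+1\le a$ one has $v\mid u$, so this point is principal and not in $(W_2)_q$; every remaining point of the exceptional locus lies in the first chart and was already treated. Thus over a $1$ point of $Z$ the new nonprincipal locus consists of a single $1$-point component of value $r-1$ when $r\ge 2$, and contains no $1$ point at all when $r=1$.

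Next I would globalize this by blowing up $\bar Z$ and analyzing $(W_2)_q$ above it. Over the $1$ points of $Z$ the computation above applies verbatim. Over the $2$ and $3$ points of $Z$, which occur along the intersections of $Z$ with the other components of $(W_1)_q$, the relevant local form is (\ref{n2}), and by Remark \ref{1 point goes to 1 point} such points can only sit below $2$ or $3$ points of $X_2$; in particular the blowup creates no new $1$ point there, so they do not affect $\Omega_i$. Away from $\bar Z$ the blowup is an isomorphism, so every other $1$-point component of $(W_1)_q$ maps isomorphically to a $1$-point component of $(W_2)_q$ with the same $\Omega_i$ value. Consequently a single blowup of $\bar Z$ either removes the top component $Z$ from the $1$-point locus (when $r=1$) or replaces it by a component of strictly smaller value $r-1$ (when $r\ge 2$), leaving all components of value $r$ other than $Z$ untouched.

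Finally I would run the induction on the lexicographic pair $(\Omega_i(f\circ\pi_1),\,N)$, where $N$ is the number of codimension $2$ components of $\pi_1^{-1}(U_i)\cap(W_1)_q$ whose value equals $\Omega_i(f\circ\pi_1)$. The previous paragraph shows that blowing up one maximal component $\bar Z$ strictly decreases this pair while keeping the whole sequence permissible with respect to $q$; composing such blowups and using the descending chain condition on $\mathbb{N}\times\mathbb{N}$ yields, after finitely many steps, a permissible sequence $\pi_2:X_2\to X_1$ with $\Omega_i(f\circ\pi_1\circ\pi_2)=0$. The step I expect to demand the most care is the complete local bookkeeping of $(W_2)_q$ over $\bar Z$ — in particular checking that the second chart contributes nothing to the nonprincipal locus and that no $2$- or $3$-point of $Z$ spawns a fresh $1$ point — since this is precisely what guarantees that the invariant genuinely drops rather than merely relocating.
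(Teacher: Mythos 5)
Your proposal is correct and follows essentially the same route as the paper: blow up the closure of a maximal component, use the local form $u=x_1^a$, $v=x_1^bx_2$ to show any $1$ point of $(W_2)_q$ above it has invariant $a-b-1$, invoke Remark \ref{1 point goes to 1 point} to rule out new $1$ points over $2$ or $3$ points, and induct on the pair (value of $\Omega_i$, number of maximal components). Your write-up is in fact slightly more complete than the paper's, which only records the first blowup chart and leaves the second-chart principality and the globalization bookkeeping implicit.
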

\begin{proof}
Suppose that $\Omega_i(f \circ \pi_1) > 0$. Let $Z \subset {\pi_1}^{-1}(U_i) \cap (W_1)_q$ be a 
subvariety of ${\pi_1}^{-1}(U_i)$ of codimension 2 such that
$\Omega_i(f \circ \pi_1) = \Omega_i(Z)$.

Let $\pi_2: X_2 \rightarrow X_1$ be the blowup of the Zariski closure
$\bar{Z}$ of $Z$ in $X_1$. Let $Z_1 \subset {\pi_2}^{-1}(Z)$
be a codimension 2 subvariety of ${\pi_2}^{-1}({\pi_1}^{-1}(U_i))$ such that 
$Z_1 \subset {\pi_2}^{-1}({\pi_1}^{-1}(U_i)) \cap (W_2)_q$.
We claim that $\Omega_i(Z_1) < \Omega_i(Z)$.

If there are no 1 points of $Z_1$ then we have nothing to prove. Otherwise, let $p_1 \in Z_1$ be a
1 point. Then $\pi_1(p_1) = p$ is a 1 point of $Z$ by Remark \ref{1 point goes to 1 point}.

There are regular parameters $x_1,...,x_n$ in ${\cal \hat{O}}_{X_1,p}$
 and $u,v$ in \oy such that
$u = {x_1}^a, v = {x_1}^bx_2$. There exist regular parameters $x_1,\bar{x_2},...,x_n$ in
${\cal \hat{O}}_{X_2,p_1}$ such that $x_2 = x_1(x_2+\alpha)$.

$u = {x_1}^a, v = {x_1}^{b+1}(x_2+\alpha)$. Since $p_1 \in (W_2)_q$, $\alpha = 0$.

$\Omega_i(Z_1) = \Omega_i(p_1) = a-b-1 < a-b = \Omega_i(Z)$.

The theorem now follows by induction on the number of codimension 2 subvarieties $Z$ in 
${\pi_1}^{-1}(U_i) \cap (W_1)_q $
such that $\Omega_i(f \circ \pi_1) = \Omega_i(Z)$ and induction on $\Omega_i(f \circ \pi_1)$.
\end{proof}


Let $\pi_1: X_1 \rightarrow X$ be a permissible sequence with respect to $q \in E_i$. 

Let $Z \subset {\pi_1}^{-1}(U_i)) \cap (W_1)_q $
 be a codimension 2 subvariety of ${\pi_1}^{-1}(U_i)$. Let 
$p \in Z$ be a 2 point where the form (\ref{n2}) holds.

There exist regular parameters $x_1,...,x_n$ in ${\cal \hat{O}}_{X_1,p}$ and 
$u,v$ in \oy such that $u={x_1}^{a_1}{x_2}^{a_2}$ and $v = {x_1}^{b_1}{x_2}^{b_2}$.

Define $\omega_i(p) = (a_1-b_1)(b_2-a_2)$. Then since $p \in (W_1)q$, $\omega_i(p) > 0$.  

Now define $\omega_i(Z) = \omega_i(p)$ if $p \in Z$ is a 2 point where the form (\ref{n2})
holds. If there are no 2 points of the form (\ref{n2}) in 
$Z$ define $\omega_i(Z) = 0$. Then $\omega_i(Z)$ is well-defined. 

Finally define 
\begin{eqnarray*}
\omega_i(f \circ \pi_1) = 
max\{\omega_i(Z) | Z \subset {\pi_1}^{-1}(U_i) \cap (W_1)_q{\rm ~is~ an~ irreducible}\\
{\rm ~subvariety ~of~} 
{\pi_1}^{-1}(U_i) {\rm ~of~ codimension~} 2\}
\end{eqnarray*}

\begin{theorem} \label{2 point} Let $\pi_1: X_1 \rightarrow X$ be a permissible 
sequence with respect to $q \in E_i$. Suppose that $\Omega_i(f \circ \pi_1) = 0$. 
There exists a permissible sequence $\pi_2: X_2 \rightarrow X_1$  with respect to $q$
such that $\Omega_i(f \circ \pi_1 \circ \pi_2) = 0$ 
and $\omega_i(f \circ \pi_1 \circ \pi_2) = 0$.
\end{theorem}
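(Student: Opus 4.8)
The plan is to imitate the structure of the proof of Theorem \ref{1 point}, replacing the one--point invariant $\Omega_i$ by the two--point invariant $\omega_i$, while checking that the blowups used do not reintroduce bad one points. The key preliminary observation is that the hypothesis $\Omega_i(f\circ\pi_1)=0$ is very strong: at every $1$ point of $\pi_1^{-1}(U_i)\cap (W_1)_q$ the form $(\ref{n1})$ holds with $k=1$, so $u=x_1^{a}$, $v=x_1^{b}x_2$ with $a>b$, whence $\Omega_i=a-b>0$. Therefore $\Omega_i(f\circ\pi_1)=0$ forces $\pi_1^{-1}(U_i)\cap(W_1)_q$ to contain no $1$ points, and every irreducible codimension $2$ subvariety $Z$ of this set carries at its general point a $2$ point of the form $(\ref{n2})$, at which $\omega_i$ is defined.

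Assuming $\omega_i(f\circ\pi_1)>0$, I would choose an irreducible codimension $2$ subvariety $Z\subset\pi_1^{-1}(U_i)\cap(W_1)_q$ realizing the maximum $\omega_i(Z)=\omega_i(f\circ\pi_1)$, and let $\pi_2\colon X_2\to X_1$ be the blowup of the closure $\bar Z$. Since $(W_1)_q$ is closed and, by part {\bf III} of Lemma \ref{a}, is a union of nonsingular codimension $2$ subvarieties, $\bar Z$ is a nonsingular codimension $2$ center contained in $(W_1)_q$, so $\pi_2$ is permissible with respect to $q$. That $\Omega_i(f\circ\pi_1\circ\pi_2)=0$ is then immediate from Remark \ref{1 point goes to 1 point}: any $1$ point of $\pi_2^{-1}(\pi_1^{-1}(U_i))\cap(W_2)_q$ maps to a $1$ point of $\pi_1^{-1}(U_i)\cap(W_1)_q$, of which there are none. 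Away from $\bar Z$ the morphism $\pi_2$ is an isomorphism, so every component of $(W_2)_q$ not lying over $Z$, together with its invariants, is unchanged; it thus suffices to control the $2$ points of $(W_2)_q$ lying over $Z$.

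The heart of the argument, and the step I expect to be the main obstacle, is the local computation showing $\omega_i$ strictly decreases. At a $2$ point $p\in Z$ write $u=x_1^{a_1}x_2^{a_2}$, $v=x_1^{b_1}x_2^{b_2}$ with $x_1=x_2=0$ the local equation of $Z$, and set $c_1=a_1-b_1$, $c_2=a_2-b_2$, so $\omega_i(p)=c_1(b_2-a_2)=-c_1c_2>0$; after relabelling take $c_1>0>c_2$. Blowing up $x_1=x_2=0$ yields two charts, and arguing exactly as in the treatment of form $(\ref{n2})$ in Lemma \ref{a} the unit-series shift $\alpha$ produces only non-$W$ points of the form $(\ref{4})$ or genuine form-$(\ref{n2})$ points, on which the pair of relevant exponent differences becomes $(c_1+c_2,\,c_2)$ in one chart and $(c_1,\,c_1+c_2)$ in the other. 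A point over $Z$ lies in $(W_2)_q$ only when this pair again has opposite signs: if $c_1+c_2>0$ this occurs only in the first chart, where $\omega_i=(c_1+c_2)(-c_2)<c_1(-c_2)=\omega_i(p)$; if $c_1+c_2<0$ only in the second, where $\omega_i=c_1(-(c_1+c_2))<c_1(-c_2)=\omega_i(p)$; and if $c_1+c_2=0$ one difference vanishes, making $(u,v)$ principal so the point leaves $(W_2)_q$. The same column operation on the two relevant exponents governs the $2$ points arising from $k$ points with $k\ge 3$, where $\omega_i$ is read off at the general point of each codimension $2$ component. This is precisely a Euclidean/continued-fraction descent, and it shows every component of $(W_2)_q$ over $Z$ has $\omega_i$ strictly smaller than $\omega_i(Z)$, while no component of value $\ge\omega_i(Z)$ is created.

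Finally I would conclude as in Theorem \ref{1 point}: the blowup of $\bar Z$ either strictly lowers $\omega_i(f\circ\pi_1)$ or strictly decreases the number of codimension $2$ subvarieties realizing the maximal value of $\omega_i$, all the while keeping $\Omega_i=0$. A double induction on $\omega_i(f\circ\pi_1)$ and on this number of maximal subvarieties then yields a permissible sequence $\pi_2\colon X_2\to X_1$ with respect to $q$ for which $\omega_i(f\circ\pi_1\circ\pi_2)=0$ and $\Omega_i(f\circ\pi_1\circ\pi_2)=0$, as required.
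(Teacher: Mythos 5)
Your overall strategy coincides with the paper's: blow up the closure of a codimension~2 subvariety $Z$ realizing $\omega_i(f\circ\pi_1)$, keep $\Omega_i=0$ via Remark \ref{1 point goes to 1 point}, and finish by a double induction on $\omega_i(f\circ\pi_1)$ and on the number of subvarieties achieving it; your chart computation at 2 points of $Z$ is also exactly the paper's. The gap is the sentence claiming that ``the same column operation on the two relevant exponents governs the $2$ points arising from $k$ points with $k\ge 3$.'' By Remark \ref{1 point goes to 1 point} the remaining case is a 2 point $p_1\in (W_2)_q$ lying over a \emph{3 point} $p\in Z$ of form (\ref{n2}), and this case is not governed by the Euclidean descent on the two exponents of $Z$. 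Writing $u=x_1^{a_1}x_2^{a_2}x_3^{a_3}$, $v=x_1^{b_1}x_2^{b_2}x_3^{b_3}$, $c_j=a_j-b_j$, with $Z=\{x_2=x_3=0\}$, the new 2 point has exponent differences $(c_1,\,c_2+c_3)$ and invariant $-c_1(c_2+c_3)$: it involves $c_1$, the difference attached to the divisor component \emph{not} containing $Z$, which plays no role in $\omega_i(Z)=-c_2c_3$. There is no inequality $-c_1(c_2+c_3)<-c_2c_3$ in general: for $u=x_1^5x_2,\ v=x_3^2$, i.e.\ $(c_1,c_2,c_3)=(5,1,-2)$, one has $\omega_i(Z)=2$, while the component of $(W_2)_q$ lying over the 3-point locus $\{x_1=x_2=x_3=0\}$ (which is codimension~3, hence has codimension~2 preimage, so it cannot be dismissed as non-generic) has invariant $5$. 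Thus your assertions that ``every component of $(W_2)_q$ over $Z$ has $\omega_i$ strictly smaller than $\omega_i(Z)$'' and that ``no component of value $\ge\omega_i(Z)$ is created'' do not follow from your computation, and are false as purely local statements.

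The missing idea --- the actual crux of the paper's proof --- is that at a 3 point the decrease must be measured against an \emph{auxiliary} subvariety, using maximality of $Z$, which your descent step never invokes. Since $c_2,c_3$ have opposite signs, $c_1$ is opposite in sign to exactly one of them, say $(a_1-b_1)(b_2-a_2)>0$; then $Z'$ with local equations $x_1=x_2=0$ has closure $\bar{Z'}\subset\pi_1^{-1}(U_i)\cap(W_1)_q$, and
\begin{equation*}
\omega_i(p_1)=(a_1-b_1)(b_2-a_2)+(a_1-b_1)(b_3-a_3)<(a_1-b_1)(b_2-a_2)=\omega_i(\bar{Z'})\le\omega_i(f\circ\pi_1),
\end{equation*}
the last inequality because $\omega_i(f\circ\pi_1)$ is a maximum over \emph{all} codimension~2 subvarieties of $\pi_1^{-1}(U_i)\cap(W_1)_q$. (In the example above, $\omega_i(\bar{Z'})=10\ge 5$, consistent with this; note also that there $Z$ would never be chosen as the center, precisely because it is not maximal.) So the bound is $\omega_i(Z_1)<\omega_i(f\circ\pi_1)$, obtained by comparison with $\bar{Z'}$ rather than with $Z$ itself. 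Once this step is supplied, the rest of your argument (isomorphism away from $\bar Z$, the double induction) goes through as in the paper.
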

\begin{proof}
Since $\Omega_i(f \circ \pi_1) = 0$, there are no 1 points in ${\pi_1}^{-1}(U_i) \cap (W_1)_q$.
Let $X_2 \rightarrow X_1$ be any permissible blowup. Then by 
Remark \ref{1 point goes to 1 point} it follows that 
${\pi_2}^{-1}({\pi_1}^{-1}(U_i)) \cap (W_2)_q$ has no 1 points. Hence 
$\Omega_i(f \circ \pi_1 \circ \pi_2) = 0$. 

Suppose that $\omega_i(f \circ \pi_1) > 0$. Let $Z \subset {\pi_1}^{-1}(U_i) \cap (W_1)_q$
be a codimension 2 irreducible subvariety of ${\pi_1}^{-1}(U_i)$ such that 
$\omega_i(f \circ \pi_1) = \omega_i(Z)$.

Let $\pi_2: X_2 \rightarrow X_1$ be the blowup of the Zariski closure $\bar{Z}$ of $Z$
in $X_1$. Let $Z_1 \subset {\pi_2}^{-1}(Z)$
be a codimension 2 subvariety of ${\pi_2}^{-1}({\pi_1}^{-1}(U_i))$ such that 
$Z_1 \subset {\pi_2}^{-1}({\pi_1}^{-1}(U_i)) \cap (W_2)_q$.
We prove that $\omega_i(Z_1) < \omega_i(Z) = \omega_i(f \circ \pi_1)$.

If there are no 2 points of the form (\ref{n2}) in $Z_1$ then 
$\omega_i(Z_1) = 0$ and we have nothing to prove. Otherwise let
$p_1 \in Z_1$ be a 2 point of the form (\ref{n2}). 

By Remark \ref{1 point goes to 1 point}, $p = \pi_2(p_1) \in Z$ is a 
2 or 3 point of form (\ref{n2}).

Suppose that $p \in Z$ is a 2 point. 
There exist regular parameters $x_1,...,x_n$ in ${\cal \hat{O}}_{X_1,p}$ and 
$u,v$ in \oy such that $u={x_1}^{a_1}{x_2}^{a_2}$ and $v = {x_1}^{b_1}{x_2}^{b_2}$.
Also the local equations of $Z$ are $x_1 = x_2 = 0$. 

Then there exist regular parameters $x_1,\bar{x_2},x_3...,x_n$ 
in ${\cal \hat{O}}_{X_2,p_1}$ 
such that 
$x_2 = x_1\bar{x_2}$ and 
$u={x_1}^{a_1+a_2}{\bar{x_2}}^{a_2}$ and $v = {x_1}^{b_1+b_2}{\bar{x_2}}^{b_2}$.
\begin{eqnarray*}
\omega_i(Z_1) = \omega_i(p_1) &=& (a_1+a_2-b_1-b_2)(b_2-a_2) \\
&=& (a_1-b_1)(b_2-a_2)+(a_2-b_2)(b_2-a_2)\\
&<& (a_1-b_1)(b_2-a_2) = \omega_i(p) = \omega_i(Z) = \omega_i(f \circ \pi_1).
\end{eqnarray*}
Suppose that $p \in Z$ is a 3 point. 
There exist regular parameters $x_1,...,x_n$ in ${\cal \hat{O}}_{X_1,p}$ and 
$u,v$ in \oy such that $u={x_1}^{a_1}{x_2}^{a_2}{x_3}^{a_3}$ and 
$v = {x_1}^{b_1}{x_2}^{b_2}{x_3}^{b_3}$. After permuting $x_1,x_2,x_3$ if necessary,
we can 
suppose that the local equations of $Z$ are $x_2 = x_3 = 0$.

Then there exist regular parameters $x_1,x_2,\bar{x_3}...,x_n$ 
in ${\cal \hat{O}}_{X_2,p_1}$ 
such that 
$x_3 = x_2(\bar{x_3}+\alpha)$ and 
$u={x_1}^{a_1}{x_2}^{a_2+a_3}({\bar{x_3}+\alpha})^{a_3}$ and 
$v = {x_1}^{b_1}{x_2}^{b_2+b_3}({\bar{x_3}+\alpha})^{b_3}$.

Since $p_1$ is a 2 point, we have $\alpha \neq 0$ and 
$a_1(b_2+b_3) - b_1(a_2+a_3) \neq 0$. After an appropriate change of 
variables $x_1,x_2$ we obtain regular parameters 
$\bar{x_1},\bar{x_2},\tilde{x_3},x_4,...,x_n$ in 
${\cal \hat{O}}_{X_2,p_1}$. 

$u={\bar{x_1}}^{a_1}{\bar{x_2}}^{a_2+a_3}$ and 
$v = {\bar{x_1}}^{b_1}{\bar{x_2}}^{b_2+b_3}$.

Since the local equations of $Z \subset {\pi_1}^{-1}(U_i) \cap (W_1)_q$ 
are $x_2 = x_3 = 0$, $b_2-a_2$ and $b_3-a_3$ have different signs. 
So $a_1-b_1$ has the same sign as exactly one of $b_2-a_2$ or $b_3-a_3$.
Without loss of generality suppose that $(a_1-b_1)(b_2-a_2) > 0$
and $(a_1-b_1)(b_3-a_3) < 0$. 

Let $Z^{\prime}$ be the codimension 2 variety whose local
equations are $x_1=x_2=0$ defined in an appropriately small neighborhood 
in ${\pi_1}^{-1}(U_i)$.
Then the closure $\bar{Z^{\prime}}$ of $Z^{\prime}$ in ${\pi_1}^{-1}(U_i)$ is 
an irreducible codimension 2 subvariety 
contained 
in ${\pi_1}^{-1}(U_i) \cap (W_1)_q$.
\begin{eqnarray*}
\omega_i(Z_1) = \omega_i(p_1) &=& (a_1-b_1)(b_2+b_3-a_2-a_3) \\
&=& (a_1-b_1)(b_2-a_2)+(a_1-b_1)(b_3-a_3)\\
&<& (a_1-b_1)(b_2-a_2) = \omega_i(\bar{Z^{\prime}}) \leq \omega_i(f \circ \pi_1).
\end{eqnarray*}
The theorem now follows by induction on the number of codimension 2 subvarieties $Z$ in 
${\pi_1}^{-1}(U_i) \cap (W_1)_q $
such that $\omega_i(f \circ \pi_1) = \omega_i(Z)$ and induction on $\omega_i(f \circ \pi_1)$.
\end{proof}

\begin{remark} \label{no 2 points} Let $\pi_1: X_1 \rightarrow X$ be a permissible
sequence with respect to $q$. Let $i$ be 
such that 
$1 \leq i \leq m$.

If $q \in E_i$, then
it follows from Theorems \ref{1 point} and 
\ref{2 point} that there exists a permissible sequence 
$\pi_2: X_2 \rightarrow X_1$ 
with respect to $q$
such that $\Omega_i(f \circ \pi_1 \circ \pi_2) = 0$ and 
$\omega_i(f \circ \pi_1 \circ \pi_2) = 0$.
\end{remark}

\begin{theorem}\label{principalization} Let \map be a locally toroidal morphism 
between
a nonsingular $n$-fold $X$ and a nonsingular surface $Y$. Let $q \in Y$.

Then there exists a permissible sequence $\pi_1: X_1
\rightarrow X$ with respect to $q$ such that $(W_1)_q$ is empty.
\end{theorem}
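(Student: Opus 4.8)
The plan is to clear $(W_1)_q$ one open set at a time over the cover $\{U_1,\dots,U_m\}$, running the invariant-reduction machinery of Theorems \ref{not in E_i}, \ref{1 point} and \ref{2 point} on a single $U_i$ at each stage. Concretely, I would prove by induction on $i$ that there is a permissible sequence $\rho_i \colon X_i \to X$ with respect to $q$ for which $\rho_i^{-1}(U_1 \cup \dots \cup U_i) \cap (W_i)_q = \emptyset$. Specializing to $i = m$ and using $U_1 \cup \dots \cup U_m = X$ then yields $(W_m)_q = \emptyset$, which is exactly the assertion.

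For the inductive step, I assume $\rho_{i-1}^{-1}(U_1 \cup \dots \cup U_{i-1}) \cap (W_{i-1})_q = \emptyset$ and clean up $U_i$. If $q \notin E_i$, Theorem \ref{not in E_i} supplies a permissible sequence $\sigma \colon X_i \to X_{i-1}$ with $\sigma^{-1}(\rho_{i-1}^{-1}(U_i)) \cap (W_i)_q = \emptyset$. If $q \in E_i$, Remark \ref{no 2 points} (that is, Theorems \ref{1 point} and \ref{2 point} applied in turn) supplies a permissible $\sigma$ with $\Omega_i(f \circ \rho_{i-1} \circ \sigma) = 0$ and $\omega_i(f \circ \rho_{i-1} \circ \sigma) = 0$; I then claim these two vanishings already force $\sigma^{-1}(\rho_{i-1}^{-1}(U_i)) \cap (W_i)_q = \emptyset$.

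The substantive point is this last claim, and I would argue it via a dichotomy on components. Every irreducible codimension $2$ subvariety $Z$ of $\rho_i^{-1}(U_i) \cap (W_i)_q$ is nonsingular by Lemma \ref{a}(III), so it has a generic closed point $p$ at which Lemma \ref{a}(I.B) applies: near $p$ the locus $(W_i)_q$ is a union of coordinate codimension $2$ subvarieties, and $Z$ is the one through $p$. In form (\ref{n1}) the local components of $(W_i)_q$ are $\{x_i = x_{k+1} = 0\}$, cut out by one divisor coordinate $x_i$ and the non-divisor coordinate $x_{k+1}$; hence at a generic point of such a $Z$ only the single divisor component $\{x_i = 0\}$ passes through, so $Z$ is generically a $1$ point and $\Omega_i(Z) = a_i - b_i > 0$. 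In form (\ref{n2}) the local components are $\{x_i = x_j = 0\}$ with both $x_i, x_j$ dividing the local equation of $\rho_i^{-1}(D_i)$; hence $Z$ is generically a $2$ point of form (\ref{n2}) and $\omega_i(Z) = (a_i - b_i)(b_j - a_j) > 0$. Since these are the only two possibilities, $\Omega_i = \omega_i = 0$ leaves no room for any such $Z$, so the intersection is empty.

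Finally I must verify that the induction is coherent, namely that the sequence $\sigma$ cleaning $U_i$ does not reintroduce non-principal points over the already-cleaned $U_1 \cup \dots \cup U_{i-1}$. This is where permissibility does the work: each blowup in $\sigma$ is centered in the current $(W)_q$, and since $(W)_q$ is empty over $\rho_{i-1}^{-1}(U_j)$ for $j < i$, the center misses these opens, so each such blowup is an isomorphism there. As being in $(W)_q$ is a local condition on the completed local ring (and so is preserved under isomorphism over $Y$), emptiness over $\rho^{-1}(U_1 \cup \dots \cup U_{i-1})$ persists through all of $\sigma$. I expect this bookkeeping, rather than any single invariant reduction, to be the main thing to get right, since it is precisely what lets the one-chart-at-a-time strategy terminate with $(W_m)_q$ globally empty.
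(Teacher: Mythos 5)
Your proposal is correct and follows essentially the same route as the paper: clear the charts one at a time using Theorem \ref{not in E_i} when $q \notin E_i$ and Remark \ref{no 2 points} when $q \in E_i$, observe that $\Omega_i = \omega_i = 0$ forces emptiness because a generic point of any codimension $2$ component of the non-principal locus is a $1$ point or a $2$ point of form (\ref{n2}), and note that permissible centers lie in the current $(W)_q$ and hence never disturb the already-cleaned charts. The paper states this last persistence point without proof ("Notice that we also have..."), whereas you spell out the justification; otherwise the two arguments coincide.
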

\begin{proof} First we apply 
Theorem \ref{not in E_i} and Remark \ref{no 2 points}
to $X$ and $i = 1$.

Suppose that $q \notin E_1$. Then 
by Theorem \ref{not in E_i},
there exists 
a permissible sequence $\pi_1:X_1 \rightarrow X$ with 
respect to $q$ such that 
${\pi_1}^{-1}(U_1) \cap (W_1)_q = \emptyset$.

Now suppose that $q \in E_1$. 
It follows from Remark \ref{no 2 points} 
that there exists 
a permissible sequence $\pi_1:X_1 \rightarrow X$ with respect to $q$
such that 
$\Omega_1(f \circ \pi_1) = 0$ and 
$\omega_1(f \circ \pi_1) = 0$.
So there are no 1 points or 2 points of the
form (\ref{n2})
in
${\pi_1}^{-1}(U_1) \cap (W_1)_q$. But if $Z \subset {\pi_1}^{-1}(U_1) \cap (W_1)_q$ is any codimension 2 
irreducible subvariety of ${\pi_1}^{-1}(U_i)$, then a generic point of $Z$ must either be a 1 point 
or a 2 point of the form (\ref{n2}). 	
It follows then that ${\pi_1}^{-1}(U_1) \cap (W_1)_q$ is empty. 

Now we apply Theorem \ref{not in E_i} and
Remark \ref{no 2 points} to the permissible sequence $\pi_1: X_1 \rightarrow X$
and $i = 2$. 

If $q \notin E_2$, then by 
Theorem \ref{not in E_i}, there exists a permissible sequence 
$\pi_2: X_2 \rightarrow X_1$ such that 
${\pi_2}^{-1}({\pi_1}^{-1}(U_2)) \cap (W_2)_q = \emptyset$.

If $q \in E_2$, then as above there exists a permissible sequence 
$\pi_2: X_2 \rightarrow X_1$ such that 
${\pi_2}^{-1}({\pi_1}^{-1}(U_2)) \cap (W_2)_q$ is empty. 

Notice that we also have 
${\pi_2}^{-1}({\pi_1}^{-1}(U_1)) \cap (W_2)_q = \emptyset$.

Repeating the argument for $i = 3,4,...,m$ we obtain the desired permissible sequence.
\end{proof}

\section{Toroidalization}
\begin{theorem}	\label{final} 
Let \map be a locally toroidal morphism from a nonsingular
$n$-fold $X$ to a nonsingular surface $Y$ with respect to 
open coverings $\{U_1,...,U_m\}$ and $\{V_1,...,V_m\}$ 
of $X$ and $Y$ respectively and SNC divisors 
$D_i$ and $E_i$  
in $U_i$ and $V_i$ respectively. 
Let $\pi:Y_1 \rightarrow Y$ be the blowup
of a point $q \in Y$.

Then there exists a permissible sequence
$\pi_1:X_1 \rightarrow X$ such that there is a 
locally toroidal morphism $f_1:X_1
\rightarrow Y_1$ such that $\pi \circ f_1 = f \circ \pi_1$.


\end{theorem}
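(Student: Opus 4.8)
The plan is to first make the ideal of $q$ principal on a permissible modification of $X$, then obtain $f_1$ from the universal property of the blowup, and finally read off a local toroidal structure chart by chart from the normal forms of Lemma \ref{a}. Concretely, I would apply Theorem \ref{principalization} to the given point $q$, obtaining a permissible sequence $\pi_1 : X_1 \to X$ with respect to $q$ for which $(W_1)_q = \emptyset$. By Definition \ref{permissible} each blowup in this sequence is centred at a nonsingular codimension $2$ subvariety, so $\pi_1$ is a sequence of blowups of nonsingular varieties as required. The vanishing $(W_1)_q = \emptyset$ means precisely that $m_q\hat{\mathcal{O}}_{X_1,p}$ is invertible for every closed point $p$, hence the ideal sheaf $m_q\mathcal{O}_{X_1}$ is invertible. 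Since $\pi : Y_1 \to Y$ is the blowup of $Y$ along the ideal $m_q$ of $q$, the universal property of blowing up provides a unique morphism $f_1 : X_1 \to Y_1$ with $\pi \circ f_1 = f \circ \pi_1$. This gives the commutative square, and it remains to exhibit a local toroidal structure for $f_1$.

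For the coverings I would take $\{\pi_1^{-1}(U_i)\}$ on $X_1$ and $\{\pi^{-1}(V_i)\}$ on $Y_1$; the inclusion $f_1(\pi_1^{-1}(U_i)) \subset \pi^{-1}(V_i)$ follows at once from $\pi \circ f_1 = f \circ \pi_1$ together with $f(U_i) \subset V_i$. Writing $\mathcal{E} = \pi^{-1}(q)$ for the exceptional curve, on $\pi^{-1}(V_i)$ I would take the reduced divisor $E_i' = \pi^{-1}(E_i) \cup \mathcal{E}$; since blowing up a closed point of the nonsingular surface $Y$ keeps the total transform of the SNC divisor $E_i$ together with $\mathcal{E}$ simple normal crossing, $E_i'$ is SNC. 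I would then define $D_i' = f_1^{-1}(E_i')$, so that the equality $f_1^{-1}(E_i') = D_i'$ holds by construction; the content to be checked is that $D_i'$ is SNC and contains $\mathrm{Sing}(f_1)$, and that $f_1$ is toroidal with respect to $D_i'$ and $E_i'$.

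This verification is local and I would organise it by whether $q \in E_i$. Away from $\mathcal{E}$ both $\pi$ and $\pi_1$ are isomorphisms, so near such points $f_1$ coincides with $f$ and inherits toroidality from the strict transforms of $D_i$ and $E_i$. Over $\mathcal{E}$ I would invoke Lemma \ref{a}: because $(W_1)_q = \emptyset$, the non-principal forms (\ref{n1}), (\ref{n2}) and (\ref{n3}) are excluded at every point of $f_1^{-1}(\mathcal{E})$, leaving only the forms in which the ideal $(u,v)$ is principal, namely (\ref{7}) and (\ref{8}) when $q \notin E_i$, and those among (\ref{1})--(\ref{5}) with $(u,v)$ principal when $q \in E_i$. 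For each such form I would pass to the affine chart of $\pi$ containing $f_1(p)$, with coordinates $(u, v/u)$ and $\mathcal{E} = \{u = 0\}$, or $(u/v, v)$ and $\mathcal{E} = \{v = 0\}$, and substitute. For instance (\ref{7}), $u = x_1,\ v = x_1(x_2 + \alpha)$, becomes $u = x_1,\ v/u = x_2 + \alpha$, a smooth map with $\mathcal{E}$ pulling back to $x_1 = 0$; form (\ref{1}) with $b_i = a_i$ becomes $u = x_1^{a_1}\cdots x_k^{a_k},\ v/u = x_{k+1}$, which is exactly the toroidal form (\ref{t1}) with $\mathcal{E}$ pulling back to $\pi_1^{-1}(D_i)$; and the cases of (\ref{2}) and (\ref{5}) in which $v$ divides $u$ become monomial maps into the chart where $E_i' = \mathcal{E} \cup \widetilde{E}_i$ is the node $uv = 0$, reproducing the toroidal form (\ref{t3}) (the exponent matrix keeping rank $2$ because $f$ is dominant). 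In every case $D_i' = f_1^{-1}(E_i')$ is visibly a coordinate SNC divisor containing the singular locus of the resulting monomial map, so $f_1$ is toroidal there, and $f_1$ is then locally toroidal in the sense of Definition \ref{locally toroidal}.

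The step I expect to be the main obstacle is precisely this last bookkeeping: one must check that \emph{every} principalized normal form of Lemma \ref{a}, after the coordinate substitution dictated by the correct affine chart of $Y_1$, falls into one of the standard toroidal forms (\ref{t1})--(\ref{t3}), and that the image point $f_1(p)$ is correctly recognised as a smooth point or a node of $E_i'$. In particular one must verify that dominance of $f$ forces the relevant $2 \times k$ exponent matrices to have rank $2$, so that no degenerate (rank $1$) configurations arise and the correct distinction between a $1$-point and a $2$-point of $E_i'$ is made in each chart.
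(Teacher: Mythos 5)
Your proposal is correct and follows essentially the same route as the paper's proof: apply Theorem \ref{principalization} to make $m_q{\cal O}_{X_1}$ invertible, obtain $f_1$ from the universal property of the blowup, and then verify chart by chart, using the principal forms of Lemma \ref{a} (with the non-principal forms excluded since $(W_1)_q = \emptyset$), that each form becomes one of the toroidal forms (\ref{t1})--(\ref{t3}). The only (harmless) deviation is that you enlarge the divisor on $Y_1$ by the exceptional curve even when $q \notin E_i$, whereas the paper works with $\pi^{-1}(E_i)$ itself and in that case simply checks that $f_1$ is smooth at all points lying over $q$.
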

\begin{proof} By Theorem \ref{principalization} there is a
permissible sequence $\pi_1: X_1 \rightarrow X$ such that there exists a morphism
$f_1: X_1 \rightarrow Y_1$ and $\pi \circ f_1 = f \circ \pi_1$.

Let $p \in X_1$. Suppose that 
$p \in {\pi_1}^{-1}(U_i)$ for 
some $i$ such that $1 \leq i \leq m$. 
If $\pi_1(p) \notin f^{-1}(q)$ then
we have nothing to prove. So we assume that $\pi_1(p) \in
{f}^{-1}(q)$.

Suppose first that $q \notin E_i$. Then 
by Lemma \ref{a} one of the forms 
(\ref{7}) or (\ref{8}) holds at $p$.
So there exist regular 
parameters $x_1,...,x_n$ in ${\cal \hat{O}}_{X_1,p}$
and $u,v$ in ${\cal {O}}_{Y,q}$ such that $$u = {x_1},
~v = {x_1}(x_2+\alpha)~{\rm  for~ some~} \alpha \in K,{\rm ~ or~}
u = x_1y_1, v = x_2.$$

Let $f_1(p) = q_1$. There exist regular parameters
$u_1,v_1 \in {\cal O}_{Y_1,q_1}$ such that 
$$u = u_1, v = u_1(v_1+\alpha) ~{\rm or}~ u = u_1v_1, v = v_1$$
according as the form (\ref{7}) 
or the form (\ref{8}) holds.  
In either case, we have
$u_1 = x_1, v_1 = x_2$,
and $f_1$ is smooth at $p$.

Now suppose that $q \in E_i$.

By Lemma \ref{a} there exist regular parameters \x ~ in ${\cal
\hat{O}}_{X_1,p}$ and $u,v$ in \oy such that one of the forms 
(\ref{1}), (\ref{2}), (\ref{3}), (\ref{4}), or (\ref{5})
of Lemma \ref{a} holds.

Suppose first that the form (\ref{1}) holds. Then since $m_q{\cal \hat{O}}_{X_1,p}$
is invertible, there exist regular parameters $x_1,...,x_n$ in ${\cal \hat{O}}_{X_1,p}$
and $u,v$ in ${\cal {O}}_{Y,q}$ such that $u = {x_1}^{a_1}...{x_k}^{a_k},
~v = {x_1}^{a_1}...{x_k}^{a_k}x_{k+1}$ for some $1 \leq k \leq n-1$.

Further $x_1...x_k = 0$ is a local equation of
${\pi_1}^{-1}(D_i)$ and $u = 0$ is a local equation for $E_i$. 

Let $f_1(p) = q_1$. There exist regular parameters $(u_1,v_1)$ in
${\cal O}_{Y_1,q_1}$ such that $u=u_1$ and $v=u_1v_1$. Hence the
local equation of ${\pi}^{-1}(E_i)$ at $q_1$ is $u_1 = 0$. $$u_1
= {x_1}^{a_1}...{x_k}^{a_k}, v_1= x_{k+1}.$$ This is the form
(\ref{t1}). 

Suppose now that the form (\ref{2}) holds at $p$ for $f \circ
\pi_1$. There exist regular 
parameters $x_1,...,x_n$ in ${\cal \hat{O}}_{X_1,p}$
and $u,v$ in ${\cal {O}}_{Y,q}$  and $1 \leq k \leq n-1$
such that 
$u = 0$ is a local equation of $E_i$,
$x_1...x_kx_{k+1} = 0$ is a local equation of
${\pi_1}^{-1}(D_i)$ and $$u =
{x_1}^{a_1}...{x_k}^{a_k}{x_{k+1}}^{a_{k+1}}, v =
{x_1}^{b_1}...{x_k}^{b_k}{x_{k+1}}^{b_{k+1}},$$ where $b_i \leq a_i$
for $i = 1,...,k$ and $b_{k+1} < a_{k+1}$.

Let $f_1(p) = q_1$. There exist regular parameters $u_1,v_1$ in
${\cal O}_{Y_1,q_1}$ 
such that $u=u_1v_1$ and $v=v_1$. Hence the
local equation of ${\pi}^{-1}(E_i)$ at $q_1$ is $u_1v_1 = 0$. $$u_1
= {x_1}^{a_1-b_1}...{x_k}^{a_k-b_k}{x_{k+1}}^{a_{k+1}-b_{k+1}}, v_1
= {x_1}^{b_1}...{x_k}^{b_k}{x_{k+1}}^{b_{k+1}}.$$ This is the form
(\ref{t3}). Note that the rank condition follows from the dominance of
the map $f_1$.

Suppose now that the form (\ref{3}) holds. There exist regular parameters 
$x_1,...,x_n$ in ${\cal \hat{O}}_{X_1,p}$
and $u,v$ in ${\cal {O}}_{Y,q}$ and $1 \leq k \leq n-1$
such that 
 $u = 0$ is a local
equation of $E_i$, $x_1...x_k = 0$ is a local
equation of ${\pi_1}^{-1}(D_i)$ and $$u = {x_1}^{a_1}...{x_k}^{a_k},
v = {x_1}^{b_1}...{x_k}^{b_k}(x_{k+1}+\alpha),$$ where $b_i \leq
a_i$ for all $i$ and $0 \neq \alpha \in K$.

Let $f_1(p) = q_1$. There exist regular parameters $u_1,v_1$ in
${\cal O}_{Y_1,q_1}$ such that $u=u_1v_1$ and $v=v_1$. Hence the
local equation of ${\pi}^{-1}(E_i)$ at $q_1$ is $u_1v_1 = 0$.
$$u_1 = {x_1}^{a_1-b_1}...{x_k}^{a_k-b_k}{(x_{k+1}+\alpha)}^{-1},
v_1 = {x_1}^{b_1}...{x_k}^{b_k}(x_{k+1}+\alpha).$$

If rank $\left[ \begin{array}{cccc} a_1-b_1 & .&.& a_k-b_k \\b_1
&.&.& b_k
\end{array} \right]$= 2 then there exist regular parameters
\xx~ in ${\cal \hat{O}}_{X_1,p}$ such that $u_1 =
{\bar{x_1}}^{a_1-b_1}...{\bar{x_k}}^{a_k-b_k}, v_1 =
{\bar{x_1}}^{b_1}...{\bar{x_k}}^{b_k}.$ This is the form 
(\ref{t3}).

If rank $\left[ \begin{array}{cccc} a_1-b_1 & .&.& a_k-b_k \\b_1
&.&.& b_k
\end{array} \right]$ $<$  2 then there exist regular parameters
\xx~ in ${\cal \hat{O}}_{X_1,p}$ such that $u_1 =
{({\bar{x_1}}^{a_1}...{\bar{x_k}}^{a_k})}^m, ~ v =
{({\bar{x_1}}^{a_1}...{\bar{x_k}}^{a_k})}^t(x_{k+1}+\beta)$, with
$\beta \neq 0$. This is the form (\ref{t2}).

Suppose that the form (\ref{4}) holds. There exist regular parameters 
$x_1,...,x_n$ in ${\cal \hat{O}}_{X_1,p}$
and $u,v$ in ${\cal {O}}_{Y,q}$ 
and $1 \leq k \leq n-1$
such that 
 $uv = 0$ is a
local equation for $E_i$, $x_1...x_k = 0$ is
a local equation of ${\pi_1}^{-1}(D_i)$ and $$u =
{({x_1}^{a_1}...{x_k}^{a_k})}^m, ~ v =
{({x_1}^{a_1}...{x_k}^{a_k})}^t(\alpha+x_{k+1}),$$ where
$a_1,...,a_k, m, t> 0$ and $\alpha \in K - \{0\}$.

Suppose that $m \leq t$. There exist regular parameters $u_1,v_1$
in ${\cal O}_{Y_1,q_1}$ such that $u=u_1$ and $v=u_1(v_1+\beta)$ for
some $\beta \in K$. $$u_1 = {({x_1}^{a_1}...{x_k}^{a_k})}^m, ~ v_1 =
{({x_1}^{a_1}...{x_k}^{a_k})}^{t-m}(\alpha+x_{k+1})-\beta.$$

If $m<t$ then $\beta = 0$. So $u_1v_1=0$ is a local equation of
${\pi}^{-1}(E_i)$ and we have the form (\ref{t2}). If $m=t$ then
$\alpha = \beta \neq 0$ and $u_1$ is a local equation of
${\pi}^{-1}(E_i)$. In this case we have the form (\ref{t1}).

Suppose that $m>t$. Then there exist regular parameters $u_1,v_1$
in ${\cal O}_{Y_1,q_1}$ such that $u=u_1v_1$ and $v=v_1$. $$u_1 =
{({x_1}^{a_1}...{x_k}^{a_k})}^{m-t}{(\alpha+x_{k+1})}^{-1}, ~ v_1 =
{({x_1}^{a_1}...{x_k}^{a_k})}^t(\alpha+x_{k+1}).$$ We obtain the
form (\ref{t2}).

Finally suppose that the form (\ref{5}) holds. There exist regular parameters 
$x_1,...,x_n$ in ${\cal \hat{O}}_{X_1,p}$ and $u,v$ in ${\cal {O}}_{Y,q}$ 
and $2 \leq k \leq n$
such that $uv = 0$ is a local equation of $E_i$ and 
$x_1...x_k = 0$ is a local equation of ${\pi_1}^{-1}(D_i)$ and 
$u = {x_1}^{a_1}...{x_k}^{a_k},
v = {x_1}^{b_1}...{x_k}^{b_k},$ where 
rank $\left[
\begin{array}{cccc} a_1 & .&.& a_k \\b_1 &.&.& b_k \end{array}
\right]$ = 2 .

We have either $a_i \geq b_i$ for all $i$ or $a_i \leq b_i$ for all $i$. 
Without loss of generality, suppose that $a_i \leq b_i$ for all $i$. 

Let $f_1(p) = q_1$. There exist regular parameters $u_1,v_1$ in
${\cal O}_{Y_1,q_1}$ 
such that $u=u_1$ and $v=u_1v_1$. Hence the
local equation of ${\pi}^{-1}(E_i)$ at $q_1$ is $u_1v_1 = 0$. $$u_1
= {x_1}^{a_1}...{x_k}^{a_k}, v_1
= {x_1}^{b_1-a_1}...{x_k}^{b_k-a_k}.$$ 
Further, rank $\left[
\begin{array}{cccc} a_1 & .&.& a_k \\b_1-a_1 &.&.& b_k-a_k \end{array}
\right]$ = 2. This is the form
(\ref{t1}). 
\end{proof}

Now we are ready to prove our main theorem.

\begin{theorem}\label{Final}
Suppose that \map is a locally toroidal morphism between
a variety $X$ and a surface $Y$. Then there exists a 
commutative diagram of morphisms 
\begin{displaymath}
\xymatrix @R=3pc @C=3pc{
X_1 \ar[r]^{f_1} \ar[d]^{\pi_1} & Y_1 \ar[d]^{\pi} \\
X  \ar[r]^f    & Y }
\end{displaymath}
where $\pi$, $\pi_1$ are blowups of nonsingular varieties such that 
there exist 
SNC 
divisors $E$, $D$ on $Y_1$ and $X_1$ respectively such that 
$Sing(f_1) \subset D$, ${f_1}^{-1}(E) = D$ and $f_1$
is toroidal with respect to $E$ and $D$.


\end{theorem}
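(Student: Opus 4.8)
The plan is to reduce the global statement to the local lifting result already proved in Theorem \ref{final}, and then to upgrade \emph{local} toroidality to genuine toroidality by absorbing all the local boundary divisors into a single SNC divisor $E$ on the blown-up target. The feature that makes this upgrade possible is that $Y$ is a surface: an SNC divisor on a surface has at most two branches through any point, so passing from a local boundary $E_i$ to the common divisor $E$ can change the local picture only in a very controlled way.

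First I would form the divisor $E' = \bar{E_1} + \cdots + \bar{E_m}$ on $Y$, where $\bar{E_i}$ is the Zariski closure in $Y$ of the local boundary $E_i \subset V_i$. By embedded resolution of plane curve singularities there is a composition of point blowups $\pi : Y_1 \rightarrow Y$ so that the reduced total transform $E := \pi^{-1}(E')$ is SNC on $Y_1$. Writing $\pi$ as a chain of single point blowups $Y_1 = Y^{(r)} \rightarrow \cdots \rightarrow Y^{(1)} \rightarrow Y^{(0)} = Y$ and applying Theorem \ref{final} to each blowup in turn, I obtain at each stage a permissible sequence upstairs together with a locally toroidal morphism lifting the previous one. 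Composing these gives a sequence of blowups with nonsingular centers $\pi_1 : X_1 \rightarrow X$ and a locally toroidal morphism $f_1 : X_1 \rightarrow Y_1$ with $\pi \circ f_1 = f \circ \pi_1$; its local boundary divisors on $Y_1$ are the transforms $E_i' = \pi^{-1}(E_i)$, and each $E_i'$, being a subdivisor of the SNC divisor $E$, is contained in $E$ and is itself SNC. Set $D := f_1^{-1}(E)$.

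It then remains to verify the essential observation: that $f_1$ is toroidal with respect to $E$ and $D$, that $D$ is SNC, and that $Sing(f_1) \subset D = f_1^{-1}(E)$. I would check toroidality at each closed point $p \in X_1$, choosing $i$ with $p \in \pi_1^{-1}(U_i)$ and using that $f_1$ is already toroidal there with respect to $E_i'$ and $D_i' = f_1^{-1}(E_i')$. If $f_1(p) \notin E_i'$ then $f_1$ is smooth at $p$, so suitable regular parameters give $u = x_1$, $v = x_2$; since $E$ has at most two branches at $f_1(p)$, the divisor $D$ is cut out locally by $x_1 = 0$ or $x_1 x_2 = 0$, which is SNC, and $f_1$ is plainly toroidal. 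If $f_1(p) \in E_i'$ then one of the forms (\ref{t1}), (\ref{t2}), (\ref{t3}) holds. When $E_i'$ already has two branches at $f_1(p)$ (forms (\ref{t2}), (\ref{t3})), the surface condition forces $E = E_i'$ locally, so nothing is added and $f_1$ is toroidal with respect to $E$, $D$. The only genuinely new case is form (\ref{t1}), where $E_i'$ has the single branch $u = 0$ but $E$ may carry an extra transverse branch $v = 0$ through $f_1(p)$: then $E$ is locally $uv = 0$, and since $u = x_1^{a_1} \cdots x_k^{a_k}$ and $v = x_{k+1}$, the pullback $D$ has local equation $x_1 \cdots x_k x_{k+1} = 0$, which is SNC, while $u = x_1^{a_1} \cdots x_k^{a_k}$, $v = x_{k+1}$ remains a monomial, hence toroidal, map with respect to these enlarged charts.

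This case analysis is where I expect the only real content to lie, and it is exactly where $\dim Y = 2$ is used: on a surface no more than two components of $E$ meet at a point, so enlarging $E_i'$ to $E$ either does nothing or appends a single transverse coordinate hyperplane, neither of which disturbs the monomial form. Finally $Sing(f_1) \subset D$ follows because $Sing(f_1) \cap \pi_1^{-1}(U_i) \subset D_i' \subset D$ for every $i$ by local toroidality and the $\pi_1^{-1}(U_i)$ cover $X_1$; combined with $D = f_1^{-1}(E)$ and the pointwise toroidality just checked, this yields the theorem, with $\pi$ a composition of point blowups and $\pi_1$ a composition of blowups of nonsingular centers as required.
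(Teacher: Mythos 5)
Your overall strategy is the same as the paper's: form $E' = \bar{E_1} + \cdots + \bar{E_m}$, resolve to an SNC divisor $E = \pi^{-1}(E')$ by point blowups, iterate Theorem \ref{final} along the chain of blowups to get a locally toroidal $f_1 : X_1 \rightarrow Y_1$, set $D = f_1^{-1}(E)$, and then check toroidality point by point according to how many branches of $E$ and of $E_i' = \pi^{-1}(E_i)$ pass through $q = f_1(p)$. The smooth case and the case where $E_i'$ already has two branches at $q$ (so that $E = E_i'$ locally, $Y$ being a surface) are handled correctly and match the paper's Case 1 and Case 2(b).

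However, there is a genuine gap in the one case that carries the real content, and it is exactly the case you call ``the only genuinely new case.'' When form (\ref{t1}) holds at $p$ and $E$ has a second branch through $q$ not belonging to $E_i'$, you assert that this branch is $v = 0$, i.e.\ that $E$ is locally $uv = 0$ for the same pair of parameters $u, v$ furnished by the toroidal form. This is unjustified: the toroidal form hands you some $v$ with $v = x_{k+1}$, while the second branch of $E$ is an arbitrary curve transverse to $\{u = 0\}$, cut out by some $\tilde{v}$ which in general is not $v$; transversality only gives $\tilde{v} = \alpha u + \beta v + \text{higher order terms}$ with $\beta \neq 0$. So you cannot conclude directly that $D$ has local equation $x_1 \cdots x_k x_{k+1} = 0$, nor that the map stays monomial in the given coordinates. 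The missing step --- which is the paper's Case 2(a), and the only nontrivial computation in its proof of this theorem --- is to replace $v$ by $\tilde{v}$: pull back to get $f_1^*\tilde{v} = \alpha x_1^{a_1}\cdots x_k^{a_k} + \beta x_{k+1} + \cdots$, note that since $\beta \neq 0$ this element, together with honest regular parameters $\bar{x}_1, \ldots, \bar{x}_k$ of ${\cal O}_{X_1,p}$ cutting out $\pi_1^{-1}(D_i)$, is linearly independent in $\mathfrak{m}/\mathfrak{m}^2$ and hence extends to a regular system of parameters of the local ring; then $u\tilde{v} = 0$ pulls back to $\bar{x}_1 \cdots \bar{x}_k \, f_1^*\tilde{v} = 0$, which shows $D$ is SNC at $p$, and $u, \tilde{v}$ together with the new parameters realize form (\ref{t3}). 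Without this change of parameters your argument establishes neither the SNC-ness of $D$ nor the toroidality of $f_1$ at such points.
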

\begin{proof} Let $E^{\prime} = \bar{E_1} + ... + \bar{E_m}$ where
$\bar{E_i}$ is the Zariski closure of $E_i$ in
$Y$. 
There exists a finite sequence of
blowups of points $\pi: Y_1 \rightarrow Y$ such that 
${\pi}^{-1}(E^{\prime})$ 
is a 
SNC divisor on $Y_1$. 

By Theorem \ref{final}, there exists a 
sequence of blowups $\pi_1:X_1 \rightarrow X$ such that there
is a locally toroidal morphism $f_1: X_1 \rightarrow Y_1$ with 
$f \circ \pi_1 = \pi \circ f_1$.
 
Let $E = {\pi}^{-1}(E^{\prime})$ and $D = {f_1}^{-1}(E)$.

We now verify that $E$ and $D$
are SNC divisors on $Y_1$ and $X_1$ respectively and that
$f_1: X_1 \rightarrow Y_1$ is toroidal with respect to $D$ and $E$.

Let $p \in X_1$ and let $q = f_1(p)$. 

Suppose that $p \notin D$, so that $q \notin E$. 
There exists $i$ such that $1 \leq i \leq m$ and $p \in {\pi_1}^{-1}(U_i)$. Then 
$q \notin E = {\pi}^{-1}(E^{\prime})$ $\Rightarrow$ 
$q \notin {\pi}^{-1}(E_i)$. 
So  
$p \notin {f_1}^{-1}({\pi}^{-1}(E_i)) = {{\pi}_1}^{-1}(D_i)$. 
Then $f_1$ is smooth at $p$ because 
$f_1|_{{\pi_1}^{-1}(U_i)}$ is toroidal.

Thus $Sing(f_1) \subset D$.

Suppose now that $p \in D$. Let $p \in {\pi_1}^{-1}(U_i)$ for some $i$ 
between 1 and $m$. 
If $q \notin {\pi}^{-1}(E_i)$ then $f_1$ is smooth at $p$
and then 
$D = {f_1}^{-1}(E)$ is a SNC divisor at $p$.  We assume then that $q \in {\pi}^{-1}(E_i)$. \\

\noindent
{\bf Case 1} $q \in E$ is a 1 point. 

$q$ is necessarily a 1 point of ${\pi}^{-1}(E_i)$. 

Then ${\pi}^{-1}(E_i)$ and $E$ are equal in a neighborhood of $q$. Hence 
${\pi_1}^{-1}(D_i)$ and $D$ are equal in a neighborhood of $p$. Since 
${\pi_1}^{-1}(D_i)$ is a SNC divisor in a neighborhood of $p$, $D$ is a 
SNC divisor in a neighborhood of $p$.

Since $f_1|_{{\pi_1}^{-1}(U_i)}$ is toroidal
there exist regular parameters $u,v$ in  ${\cal O}_{Y_1,q}$ 
and regular parameters $x_1,...,x_n$ in 
${\cal \hat{O}}_{X_1,p}$
such that the 
the form (\ref{t1}) holds at $p$ with respect to $E$ and $D$. 






\noindent
{\bf Case 2} $q \in E$ is a 2 point. 

$q$ is either a 1 point or a 2 point of ${\pi}^{-1}(E_i)$. \\

{\bf Case 2(a)} $q$ is a 1 point of ${\pi}^{-1}(E_i)$.

There exists regular parameters 
$u,v$ in ${\cal O}_{Y_1,q}$ 
and regular parameters $x_1,...,x_n$ in 
${\cal \hat{O}}_{X_1,p}$
such that the form (\ref{t1})
holds at $p$.        
There exists $\tilde{v} \in {\cal O}_{Y_1,q}$ such that 
$u,\tilde{v}$ are regular parameters 
in ${\cal O}_{Y_1,q}$,
${u}\tilde{v} = 0$ is a local equation for $E$ at $q$,
$u = 0$ is a local equation of ${\pi}^{-1}(E_i)$ at $q$,
and  
$$\tilde{v} = \alpha u + \beta v + {\rm ~higher~ degree ~terms~ in~
} u {\rm ~and~} v,$$
for some $\beta \in K$ with $\beta \neq 0$.

Since ${\pi_1}^{-1}(D_i)$ is a SNC divisor in a neighborhood of $p$, 
there exist regular parameters $\bar{x}_1,...,\bar{x}_n$ in
${\cal O}_{X_1,p}$ such that $\bar{x}_1...\bar{x}_k = 0$ is
a local equation of 
${\pi_1}^{-1}(D_i)$ at $p$. Since 
$x_1...x_k = 0$ is also a local equation of 
${\pi_1}^{-1}(D_i)$ at $p$, there exist units
$\delta_1,...,\delta_k \in {\cal \hat{O}}_{X_1,p}$
such that, 
after possibly permuting the $x_j$,
$x_j = \delta_j \bar{x}_j$ for $1 \leq j \leq k$.

\begin{eqnarray*}
\tilde{v} &=& \alpha u + \beta v + {\rm ~higher~ degree ~terms~ in~
} u {\rm ~and~} v \\
& = & \alpha {x_1}^{a_1}...{x_k}^{a_k} + \beta x_{k+1} + 
{\rm ~higher~ degree ~terms~ in~
} u {\rm ~and~} v \\
& = & \alpha {\delta_1}^{a_1}...{\delta_k}^{a_k}
{\bar{x}_1}^{a_1}...{\bar{x}_k}^{a_k} + \beta x_{k+1} +
{\rm ~higher~ degree ~terms~ in~
} u {\rm ~and~} v \\
\end{eqnarray*}

Let $\mathfrak{m}$ be the maximal ideal of 
${\cal O}_{X_1,p}$ and let $\hat{{\mathfrak{m}}} = \mathfrak{m}{\cal \hat{O}}_{X_1,p}$
be the maximal ideal of ${\cal \hat{O}}_{X_1,p}$.

Since $\beta \neq 0$, $\bar{x_1},...,\bar{x_k},\tilde{v}$ are linearly independent 
in $\hat{\mathfrak{m}}/{\hat{\mathfrak{m}}}^2 \cong \mathfrak{m}/{\mathfrak{m}}^2$,
so that they extend to a system of regular parameters in 
${\cal O}_{X_1,p}$. 

Say 
$\bar{x_1},...,\bar{x_k},\tilde{v},\tilde{x}_{k+2},...,\tilde{x}_n$.

$u\tilde{v} = \bar{x_1}...\bar{x_k}\tilde{v} = 0$ is a local equation of $D$
at $p$, so $D$ is a SNC divisor in a neighborhood of $p$, and $u, \tilde{v}$
give the form (\ref{t3}) with respect to the formal parameters 
$x_1,...,x_k,\tilde{v},\tilde{x}_{k+2},...,\tilde{x}_n$.\\






{\bf Case 2(b)} $q$ is a 2 point of ${\pi}^{-1}(E_i)$.

Then ${\pi}^{-1}(E_i)$ and $E$ are equal in a neighborhood of $q$. Hence 
${\pi_1}^{-1}(D_i)$ and $D$ are equal in a neighborhood of $p$. Since 
${\pi_1}^{-1}(D_i)$ is a SNC divisor in a neighborhood of $p$, $D$ is a 
SNC divisor in a neighborhood of $p$.

Since $f_1|_{{\pi_1}^{-1}(U_i)}$ is toroidal
there exist regular parameters 
$u,v$ in  ${\cal O}_{Y_1,q}$ and regular parameters $x_1,...,x_n$ in ${\cal \hat{O}}_{X_1,p}$
such that the 
one of the forms (\ref{t2}) or (\ref{t3})
holds at $p$ with respect to $E$ and $D$.

\end{proof}

\noindent
{\bf Acknowledgment:} I am sincerely grateful to my advisor Dale
Cutkosky for his continued support and help with this work.

\bibliographystyle{plain}

\end{document}